\documentclass[onefignum,onetabnum]{siamart251216}

\usepackage{lipsum}
\usepackage{amsfonts}
\usepackage{graphicx}
\usepackage{epstopdf}
\usepackage{algorithmic}
\ifpdf
  \DeclareGraphicsExtensions{.eps,.pdf,.png,.jpg}
\else
  \DeclareGraphicsExtensions{.eps}
\fi

\newsiamremark{remark}{Remark}
\newsiamremark{assumption}{Assumption}
\crefname{assumption}{Assumption}{Assumptions}
\newsiamremark{hypothesis}{Hypothesis}
\crefname{hypothesis}{Hypothesis}{Hypotheses}
\newsiamthm{claim}{Claim}
\newsiamremark{fact}{Fact}
\crefname{fact}{Fact}{Facts}

\headers{Multilevel Gaussian Processes for Inverse Problems}{S. Salatovic, A. L. Teckentrup, and S. Krumscheid}

\title{Multilevel Gaussian Processes with Applications to Inverse Problems}

\author{Stjepan Salatovic\thanks{Karlsruhe Institute of Technology, Scientific Computing Center, 76131 Karlsruhe, Germany
    (\email{stjepan.salatovic@kit.edu}, \email{sebastian.krumscheid@kit.edu}).}
  \and Aretha L. Teckentrup\thanks{University of Edinburgh, School of Mathematics and Maxwell Institute, Edinburgh EH9 3FD, UK (\email{a.teckentrup@ed.ac.uk}).}
    \and Sebastian Krumscheid\footnotemark[1]}

\usepackage{amsopn}

\ifpdf
\hypersetup{
pdftitle={Multilevel Gaussian Processes with Applications to Inverse Problems},
pdfauthor={S. Salatovic, A. L. Teckentrup, and S. Krumscheid}
}
\fi

\usepackage{mathtools}
\usepackage{amssymb}
\usepackage{enumitem}

\newcommand{\R}{\mathbb{R}}

\newcommand{\N}{\mathbb{N}}
\newcommand{\E}{\mathbb{E}}

\newcommand{\pos}[1]{\left(#1\right)_+}

\DeclareMathOperator*{\argmax}{arg\,max}

\begin{document}

\maketitle

\begin{abstract}
We present a multilevel Gaussian process (GP) regression framework for approximating functions that are accessible only through a hierarchy of approximations of increasing accuracy and computational cost.
One example is the response surface of a parametrized partial differential equation, where the hierarchy arises from the resolution of the numerical solver.
Modeling level increments independently with Matérn-kernel GPs and allocating samples flexibly across discretization levels yields substantial computational savings over single-level GP regression.
We derive $L^2$ convergence rates for the posterior mean and variance, accounting for maximum likelihood estimation of the marginal variance and for kernel misspecification.
When increment regularity or solver cost is unknown a priori, an adaptive sequential-design algorithm with cost-adapted active learning criteria allocates samples across levels.
We apply the surrogate to Bayesian inverse problems, for which Hellinger distance bounds between the true and approximate posteriors are known.
Numerical experiments illustrate the theoretical rates and the resulting computational savings.
\end{abstract}

\begin{keywords}
Gaussian process regression, multilevel Monte Carlo, surrogate modeling,
Bayesian inverse problems, sequential experimental design
\end{keywords}

\begin{MSCcodes}
60G15, 62F15, 62K05, 65D15, 41A25
\end{MSCcodes}


\section{Introduction}
\label{sec:intro}

Many problems in uncertainty quantification and scientific computing require the approximation of a function
\[
f : \Omega \subset \R^d \to \R,
\]
whose evaluation is computationally expensive.
Such functions arise, for instance, when describing uncertainty
propagation in terms of statistical moments, in optimization procedures, or in
the solution of Bayesian inverse problems.
In many applications, $f$ cannot be evaluated exactly and is instead accessible only through a hierarchy of approximations of increasing accuracy and
computational cost.
More generally, such a hierarchy may arise from varying the resolution of a
numerical solver, the tolerance of an iterative method, or the fidelity of a
physical model.
A prototypical example, which we use as a running example throughout this
work, is the case where $f$ is a so-called response surface derived from the solution
of a parametrized partial differential equation (PDE),
\[
\omega \mapsto
f(\omega) \coloneqq Q(u(\cdot,\omega)),
\]
where $u(\cdot,\omega)$ denotes the solution of the PDE for a parameter realization $\omega \in \Omega$ and $Q$ is a quantity of interest.
Here, the underlying PDE typically does not admit an analytical solution, so
evaluations of $f$ rely on numerical approximations, which are only approximate
and, for complex models, computationally demanding, since each evaluation
requires solving the governing PDE for a given parameter realization
$\omega \in \Omega$.
This motivates the construction of surrogate models that accurately approximate $f$ while requiring only a limited number of function evaluations \cite{Sacks1989, Kennedy2001}.
Once constructed, the surrogate can be used efficiently in downstream computational tasks such as Monte Carlo sampling.
Examples of deterministic surrogate models include polynomial approximation \cite{Cohen2015}, neural networks, splines, and reduced-order models \cite{Hesthaven2016}.
A further example is stochastic collocation \cite{Babuska2010} which, despite its name, is likewise deterministic, constructing a structured interpolant in the stochastic parameter space from evaluations of the model at carefully chosen collocation points.
Probabilistic methods, such as Bayesian neural networks or Gaussian processes \cite{rasmussenGaussianProcessesMachine2006a}, additionally quantify predictive uncertainty, thereby providing a built-in error estimate.

In this work, we focus on approximating $f$ using Gaussian process (GP) regression.
For sufficiently smooth functions, this method achieves optimal convergence rates, provided the training data and model hyperparameters are chosen well \cite{teckentrupConvergenceGaussianProcess2020}.
Optimality here is in the sense that the resulting error rate can not be improved by any algorithm based on the same number of point evaluations.
Attaining a prescribed accuracy, however, requires a sufficiently large number of training samples.
Since each evaluation of $f$ involves solving the underlying PDE, the total cost is dominated by these expensive evaluations and quickly becomes prohibitive.
To mitigate these costs, we use multiple discretizations of a numerical solver, which provide better accuracy as the computational effort increases.
For a finite element method, for instance, successive levels correspond to progressively finer meshes.
Multilevel Monte Carlo methods \cite{gilesMultilevelMonteCarlo2015, Heinrich2001, Cliffe2011} exploit such a hierarchy by combining information across levels, computing most of the work on cheap coarse discretizations while using only a few expensive fine-level samples, which is shown to substantially reduce the overall computational complexity.
Early works targeted the computation of expected values \cite{Giles2008}, while others estimated additional statistical quantities \cite{krumscheidQuantifyingUncertainSystem2020, ayoul-guilmardQUANTIFYINGUNCERTAINSYSTEM2023, Giles2015}, or the approximation of functions themselves \cite{krumscheidMultilevelMonteCarlo2018}.
Multilevel methods have since then also been applied to stochastic collocation \cite{teckentrupMultilevelStochasticCollocation2015}, kernel-based interpolation \cite{nobileSparseApproximationMultilinear2018, sungStackingDesignsDesigning2024}, and optimal sampling for polynomial regression \cite{Haji-Ali2020}.

Similar to these works, we introduce and analyze a novel multilevel GP framework for the approximation of a function.
Although there are some approaches for studying hierarchical constructions of GPs \cite{Kennedy2000, gratietRECURSIVECOKRIGINGMODEL2014,nobileSparseApproximationMultilinear2018, sungStackingDesignsDesigning2024, eharaADAPTIVESTRATEGYSEQUENTIAL2023}, a systematic multilevel extension to GP regression that includes the surrogate’s variance has received no attention.
A key distinction from the autoregressive multi-fidelity approaches of \cite{Kennedy2000, gratietRECURSIVECOKRIGINGMODEL2014} is that we model the level differences by independent GPs, whereas those works condition each higher-fidelity model on the lower-fidelity ones.
While the autoregressive construction could potentially improve approximation quality by exploiting cross-level correlations, the independent formulation adopted here leads to a considerably simpler implementation and a more tractable theoretical analysis.
The adaptive multilevel approach of \cite{eharaADAPTIVESTRATEGYSEQUENTIAL2023} is also related, which models the unknown function as a GP itself and adaptively allocates samples for a prescribed computational budget.
We treat the function to be deterministic, focus primarily on a rigorous error and complexity analysis, and aim to attain a specified error tolerance.
Finally, due to the equivalence of a GP’s posterior mean with a kernel interpolant \cite{kanagawaGaussianProcessesKernel2018}, the kernel-based multilevel approaches of \cite{nobileSparseApproximationMultilinear2018, sungStackingDesignsDesigning2024} are comparable to the deterministic part of the surrogate presented here.
However, the present work explicitly exploits the stochastic component of the GP, which is missing in the kernel-based approaches, and therefore additionally enables uncertainty quantification.
Controlling the resulting predictive variance requires an appropriate choice of level-dependent hyperparameters, most notably the marginal variance of each prior kernel.
For this, we build on the convergence analysis of the maximum likelihood estimator in \cite{karvonenMaximumLikelihoodEstimation2020}, accounting for the effect of smoothness misspecification, which is not considered in the prior works mentioned above.
As an application, we consider Bayesian inverse problems \cite{stuartInverseProblemsBayesian2010}, incorporating bounds on the Hellinger distance between the true and surrogate posterior \cite{stuartPosteriorConsistencyGaussian2017}, and identify regimes in which the multilevel approach yields substantial computational savings over its single-level counterpart.
Finally, an adaptive algorithm for sample allocation across levels is proposed for settings where the problem-dependent convergence rates are not known a priori.
The theoretical findings are illustrated by numerical experiments.

The paper is organized as follows. We present the basics and convergence properties of GPs in Section \ref{sec:gpr}.
In Section \ref{sec:mlgp} we introduce the multilevel GP and carry out the corresponding error analysis.
Section \ref{sec:bip} applies our results to Bayesian inverse problems.
In Section \ref{sec:adaptive}, we present an adaptive algorithm for the construction of multilevel GPs.
Section \ref{sec:numerics} illustrates the theoretical rates in numerical experiments, and a conclusion is given in Section \ref{sec:conclusion}.


\section{Gaussian process regression}
\label{sec:gpr}

In the following section, we give a short background on GP conditioning, a specific kernel choice, and approximation quality.
For this, let $f : \Omega \to \R$ be an unknown deterministic function defined on $\Omega \subset \R^d$.
Throughout this work, we assume that $\Omega \subset \R^d$ is a bounded domain with Lipschitz boundary, satisfying an interior cone condition \cite[Section 3.3]{Wendland2004}.
The goal of GP regression is to construct an approximation of $f$ from finitely many point evaluations.
This is achieved by placing a probabilistic prior on $f$ in the form of a GP, $f_\mathrm{GP} \sim \mathrm{GP}(m, k)$, specified by a mean function $m : \Omega \to \R$ and a covariance (or kernel) function $k : \Omega \times \Omega \to \R$.
This means that for any finite set of points $\{\omega_1, \ldots, \omega_N\} \subset \Omega$ of any size $N \in \N$, the random vector
\[
\left(f_\mathrm{GP}(\omega_1), \ldots, f_\mathrm{GP}(\omega_N)\right) \in \R^N
\]
is multivariate normally distributed with mean vector $(m(\omega_i))_{i=1}^N$ and covariance matrix $(k(\omega_i, \omega_j))_{i,j=1}^N$.
Denoting such a set of design points by $D_N := \{\omega_1, \ldots, \omega_N\}$ and conditioning the prior process $f_\mathrm{GP}$ on corresponding function evaluations
\[
f(D_N) := \{f(\omega_1), \ldots, f(\omega_N)\}
\]
yields again a GP \cite{rasmussenGaussianProcessesMachine2006a}, namely the posterior
$$
f_\mathrm{GP} \mid f(D_N) \sim \mathrm{GP}(m_N^f, k_N),
$$
with posterior mean $m_N^f: \Omega \to \R$ and covariance kernel $k_N: \Omega \times \Omega \to \R$ given by
\begin{align}
    m_N^f(\omega)
    &= m(\omega) + k(\omega, D_N)^\top k(D_N, D_N)^{-1} \bigl(f(D_N) - m(D_N)\bigr),
    \label{eq:posterior_mean} \\
    k_N(\omega,\omega')
    &= k(\omega,\omega') - k(\omega, D_N)^\top k(D_N, D_N)^{-1} k(D_N, \omega').
    \label{eq:posterior_cov}
\end{align}
Here, $k(\omega, D_N) \in \R^N$ denotes the vector with entries $(k(\omega,\omega_i))_{i=1}^N$ and $k(D_N, D_N) \in \R^{N \times N}$ is the kernel matrix with entries $(k(\omega_i,\omega_j))_{i,j=1}^N$.
The posterior mean $m_N^f$ often serves as the surrogate for $f$, while the posterior covariance $k_N$ quantifies the remaining model uncertainty.
In what follows, we are often interested in the integrated posterior variance
\[
\|k_N^{1/2}\|_{L^2(\Omega)}^2 = \int_\Omega k_N(\omega,\omega)\, d\omega,
\]
for which $k_N^{1/2}$ is interpreted as the mapping $\omega \mapsto k_N(\omega,\omega)^{1/2}$ (also known as the \emph{power function} in the kernel literature).
In the context of Bayesian inverse problems, as we will see in Section \ref{sec:bip}, one can incorporate all available surrogate information to build approximations, including the uncertainty arising from a finite dataset.
Notably, the posterior covariance $k_N$ depends only on the prior kernel $k$ and the design $D_N$, but not on the function values $f(D_N)$, which shows that the uncertainty in a GP is purely model-based, reflecting the geometry of the design $D_N$ rather than the observed data.

Throughout this work, we consider the noiseless observation setting, where the data are exact evaluations $f(\omega_i)$ with no additive observation noise.
This results in the posterior mean \eqref{eq:posterior_mean} interpolating the data, $m_N^f(\omega_i) = f(\omega_i)$, and the posterior variance vanishing, $k_N(\omega_i, \omega_i) = 0$, so that the GP regression is actually an interpolation.
This is appropriate in the present setting, where $f$ is a deterministic quantity of interest evaluated by a numerical solver, so repeated evaluations at the same parameter are reproducible and have no statistical error.
Obviously, there will be a model error depending on the discretization, which, however, depends deterministically on the parameter and has no random perturbations.
This model error is explicitly accounted for later in Section \ref{sec:error_analysis} and is not treated as an observational noise term.


\subsection{Matérn kernels and reproducing kernel Hilbert spaces}

We briefly recall the notion of a reproducing kernel Hilbert space \cite{Berlinet2004} and its characterization for the Matérn kernel.
Every \emph{positive-definite} kernel $k: \Omega \times \Omega \rightarrow \mathbb{R}$, which means that the matrix $k(D_N, D_N)$ is positive-definite for any design $D_N \subset \Omega$ of any $N \in \N$ distinct points, induces a unique reproducing kernel Hilbert space (RKHS), denoted by $\mathcal{H}_{k}(\Omega)$.
This RKHS is equipped with inner product $\langle \cdot, \cdot \rangle_{\mathcal{H}_{k}(\Omega)}$, inducing the norm $\|\cdot\|_{\mathcal{H}_{k}(\Omega)}$, and it has the properties that
\begin{equation}
\label{eq:reproducing_prop}
    k(\cdot, \omega) \in \mathcal{H}_{k}(\Omega) \quad\text{and}\quad f(\omega) = \langle f, k(\cdot, \omega) \rangle_{\mathcal{H}_{k}(\Omega)}
\end{equation}
for all $f \in \mathcal{H}_{k}(\Omega)$ and all $\omega \in \Omega$.
The latter is referred to as the reproducing property.
Throughout this work we use the Matérn covariance kernel
\begin{equation}
\label{eq:matern_kernel}
    k(\omega, \omega') = \frac{2^{1 - \nu}}{\Gamma(\nu)}\left(\frac{\|\omega - \omega'\|_2}{\lambda}\right)^\nu B_\nu\!\left( \frac{\|\omega -\omega'\|_2}{\lambda} \right),
\end{equation}
where $\Gamma$ denotes the Gamma function, $B_\nu$ the modified Bessel function of the second kind of order $\nu$, and $\|\cdot\|_2$ the Euclidean norm.
The hyperparameters are the length scale $\lambda > 0$ and the smoothness parameter $\nu > 0$.
The length scale $\lambda$ controls the decay of correlations of input points with distance, and thus the typical length over which the process varies, while the smoothness $\nu$ determines the differentiability of the resulting sample paths.
Note that \eqref{eq:matern_kernel} is defined in a normalized way with unit marginal variance $k(\omega,\omega) = 1$.
We will introduce a pointwise marginal variance $\sigma^2 > 0$ below by rescaling the kernel to $\sigma^2 k$.

The RKHS of a Matérn kernel can be described in terms of Sobolev spaces.
For $\tau \in \mathbb{N}$, the Sobolev space $W_2^\tau(\Omega)$ consists of all functions in $L^2(\Omega)$ whose weak derivatives up to order $\tau$ are square integrable,
\[
W_2^\tau(\Omega) :=
\bigl\{f \in L^2(\Omega) : D^\alpha f \in L^2(\Omega)
\;\forall\, \alpha \in \mathbb{N}^d,\; |\alpha| \leq \tau \bigr\}.
\]
For non-integer $\tau > d/2$, the fractional Sobolev space on $\mathbb{R}^d$ is defined via the Fourier transform $\hat{f}$ of $f$,
\[
W_2^\tau(\mathbb{R}^d) :=
\bigl\{f \in L^2(\mathbb{R}^d) :
\int_{\mathbb{R}^d} (1 + \|\xi\|_2^2)^\tau \,|\hat{f}(\xi)|^2 \,d\xi < \infty \bigr\}.
\]
On a bounded domain $\Omega \subset \R^d$, the fractional Sobolev space is then obtained by restriction,
\[
W_2^\tau(\Omega) := \bigl\{ f|_\Omega : f \in W_2^\tau(\mathbb{R}^d) \bigr\},
\]
equipped with the norm $\|g\|_{W_2^\tau(\Omega)} := \inf\{ \|f\|_{W_2^\tau(\mathbb{R}^d)} : f|_\Omega = g \}$.
If $k$ is a Matérn kernel with smoothness parameter $\nu > 0$, then its RKHS $\mathcal{H}_k(\Omega)$, as a set, coincides with $W_2^\tau(\Omega)$ for $\tau = \nu + d/2$.
Sample paths of $\mathrm{GP}(0,k)$, on the other hand, are rougher and lie a.s. in $W_2^{s}(\Omega)$ for every $s < \nu$.
In other words, $\mathrm{GP}(0,k)$ samples are thus less regular by $d/2$ derivatives compared to functions from the RKHS.
In particular, these samples are a.s. not in $\mathcal{H}_k(\Omega)$
\cite{kanagawaGaussianProcessesKernel2018}.
In addition to the set equivalence, the corresponding norms are equivalent, which means there exist constants $c_1, c_2 > 0$ independent of $f$, such that
\[
c_1 \|f\|_{W_2^\tau(\Omega)}
\leq \|f\|_{\mathcal{H}_k(\Omega)}
\leq c_2 \|f\|_{W_2^\tau(\Omega)}
\qquad \forall\, f \in W_2^\tau(\Omega).
\]
We write $\mathcal{H}_k(\Omega) \simeq W_2^\tau(\Omega)$ in this situation, and use $\,\simeq\,$ to denote these equivalences of Hilbert spaces.
Throughout, we also write $a \lesssim b$ if there exists a constant $C > 0$ such that $a \leq C b$, and $a \asymp b$ if both $a \lesssim b$ and $b \lesssim a$.
The constant may depend on the problem, for example on the domain $\Omega$, its dimension $d$, the fixed kernel hyperparameters $\lambda$ and $\nu$, or the
function $f$, but not on the design $D_N$ or its size $N$.


\subsection{Convergence of GP regression}

The accuracy of GP regression is mostly controlled by the regularity of the target function $f$, kernel choice $k$, and spatial distribution of the design points in $D_N$.
The latter is measured using the so-called fill distance and the separation radius.
For a design $D_N \subset \Omega$, these are defined as
\[
h_{D_N} :=  \sup_{x \in \Omega} \inf_{\omega \in D_N} \|x - \omega\|_2,
\qquad
q_{D_N} := \frac{1}{2}\min_{\substack{\omega \neq \omega' \in D_N}}
\|\omega - \omega'\|_2,
\]
respectively.
Intuitively, the fill distance $h_{D_N}$ is the radius of the largest ball in $\Omega$ containing no design point, and thus measures how well $D_N$ covers the domain, while the separation radius $q_{D_N}$ is half the smallest distance between distinct design points.
A small separation radius $q_{D_N}$ indicates nearly identical points, which leads to ill-conditioned kernel matrices.
The ratio  $\rho_{D_N} := h_{D_N} / q_{D_N}$ is called the mesh ratio.
A sequence of designs is called \emph{quasi-uniform} if the mesh ratio is uniformly bounded in $N$, that is $\rho_{D_N} \lesssim 1$.
Quasi-uniform designs attain the optimal fill-distance and separation radius scaling $h_{D_N} \asymp q_{D_N} \asymp N^{-1/d}$ \cite[Proposition~14.1]{Wendland2004}.

Besides the design, the kernel carries hyperparameters that must be specified.
Throughout this work, we fix the length scale $\lambda$ and smoothness $\nu$ a priori, and introduce a marginal variance $\sigma^2 > 0$ by scaling the unit-variance Matérn kernel, $\sigma^2 k$.
The marginal variance, also referred to as the \emph{scale}, controls the overall magnitude of the process and thereby the width of the predictive uncertainty bands.
We use both terms, marginal variance and scale, interchangeably.
The scale can be estimated from the data by maximum likelihood estimation (MLE) \cite{rasmussenGaussianProcessesMachine2006a}, which in the interpolation setting with no observational noise admits the closed form
\begin{equation}
\label{eq:mle}
\hat\sigma^2_N = \frac{1}{N}\, f(D_N)^\top k(D_N, D_N)^{-1} f(D_N),
\end{equation}
where $f(D_N)$ are the observations and $k(D_N, D_N)$ the corresponding kernel matrix.
From the posterior covariance \eqref{eq:posterior_cov}, replacing the prior kernel $k$ by $\hat\sigma_N^2\, k$ scales the posterior kernel $k_N$ by $\hat\sigma_N^2$, so the estimated scale enters the posterior variance linearly and thus directly affects the uncertainty quantified by the GP.
The following theorem bounds it together with the posterior mean and variance.

\begin{theorem}[Convergence rates for GP regression]
\label{thm:gp_error}
Let $k$ be a Matérn kernel with $\mathcal{H}_k(\Omega) \simeq W_2^{\tau}(\Omega)$ for $\tau = \nu + d/2$, and let
$f \in W_2^{\tau_f}(\Omega)$ with $\tau_f > d/2$.
Then there exists $h_0 > 0$, independent of $f$ and $D_N$, such that
for every design with $h_{D_N} \leq h_0$ the following hold, with constants
$C > 0$ independent of $f$ and $D_N$:

\medskip
\noindent\emph{(i) Posterior mean:}
\[
\|f - m_N^f\|_{L^2(\Omega)}
\;\leq\; C\, h_{D_N}^{\min(\tau,\,\tau_f)}\,
\rho_{D_N}^{\pos{\tau - \tau_f}}\, \|f\|_{W_2^{\tau_f}(\Omega)}
\]

\noindent\emph{(ii) Posterior variance:}
\[
\|k_N^{1/2}\|_{L^2(\Omega)}
\;\leq\; C\, h_{D_N}^{\tau - d/2}
\]

\noindent\emph{(iii) Marginal-variance MLE:}
\[
\hat\sigma_N
\;\leq\; C\, N^{-1/2}\, q_{D_N}^{-\pos{\tau - \tau_f}}\,
\|f\|_{W_2^{\tau_f}(\Omega)}
\]
\end{theorem}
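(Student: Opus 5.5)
We reduce all three statements to standard results from scattered-data approximation. The link is that the posterior mean $m_N^f$ (with $m\equiv 0$) is the kernel interpolant $I_N f$ of $f$ on $D_N$, and $k_N^{1/2}$ is the power function $P_N$.

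\emph{Part (ii).} We use the pointwise identity
\[
k_N(\omega,\omega)^{1/2}=P_N(\omega)=\sup_{\|g\|_{\mathcal{H}_k(\Omega)}\le 1}|g(\omega)-I_Ng(\omega)|.
\]
Then we apply the sampling inequality for functions in $W_2^\tau(\Omega)$ vanishing on $D_N$ \cite[Ch.~11]{Wendland2004}, which holds for $h_{D_N}\le h_0$ by the interior cone condition. It gives, for $g\in\mathcal H_k(\Omega)$,
\[
\|g-I_Ng\|_{L^\infty(\Omega)}\lesssim h_{D_N}^{\tau-d/2}\|g-I_Ng\|_{W_2^\tau(\Omega)}\lesssim h_{D_N}^{\tau-d/2}\|g\|_{\mathcal H_k(\Omega)}.
\]
The last step uses the norm equivalence $\mathcal H_k\simeq W_2^\tau$ and the minimal-norm property $\|I_Ng\|_{\mathcal H_k}\le\|g\|_{\mathcal H_k}$. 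This bounds $P_N$ uniformly, and integrating over the bounded domain $\Omega$ yields (ii).

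\emph{Part (i).} We split into two cases.
\begin{itemize}
\item If $\tau_f\ge\tau$, then $f\in\mathcal H_k(\Omega)$. We apply the $L^2$ sampling inequality
\[
\|u\|_{L^2}\lesssim h_{D_N}^{\tau}\|u\|_{W_2^\tau}\quad\text{for } u|_{D_N}=0
\]
to $u=f-I_Nf$, and use minimality of the interpolant together with $\|f\|_{W_2^\tau}\lesssim\|f\|_{W_2^{\tau_f}}$. This gives the rate $h^\tau$ with no mesh-ratio factor.
\item If $\tau_f<\tau$ (misspecification), we follow Narcowich--Ward--Wendland's escape-theorem argument, as used in \cite{teckentrupConvergenceGaussianProcess2020}. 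Construct a band-limited function $f_\sigma\in W_2^\tau$ with band parameter $\sigma\asymp q_{D_N}^{-1}$ that interpolates $f$ on $D_N$ and satisfies
\[
\|f_\sigma\|_{W_2^{\tau}}\lesssim q_{D_N}^{\tau_f-\tau}\|f\|_{W_2^{\tau_f}},\qquad \|f-f_\sigma\|_{W_2^{\tau_f}}\lesssim\|f\|_{W_2^{\tau_f}}.
\]
Since $I_Nf=I_Nf_\sigma$, we write $f-I_Nf=(f-f_\sigma)+(f_\sigma-I_Nf_\sigma)$. The first term vanishes on $D_N$, so the sampling inequality in $W_2^{\tau_f}$ gives $h^{\tau_f}$. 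The second term gives
\[
h^\tau q^{\tau_f-\tau}=h^{\tau_f}\rho_{D_N}^{\tau-\tau_f}.
\]
Together these give (i).
\end{itemize}

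\emph{Part (iii).} We write
\[
N\hat\sigma_N^2=f(D_N)^\top k(D_N,D_N)^{-1}f(D_N)=\|I_Nf\|_{\mathcal H_k(\Omega)}^2 .
\]
If $\tau_f\ge\tau$, this is at most $\|f\|_{\mathcal H_k}^2\lesssim\|f\|_{W_2^{\tau_f}}^2$. Otherwise, $I_Nf=I_Nf_\sigma$ and minimality give
\[
\|I_Nf\|_{\mathcal H_k}\le\|f_\sigma\|_{\mathcal H_k}\lesssim q_{D_N}^{-(\tau-\tau_f)}\|f\|_{W_2^{\tau_f}}.
\]
Taking square roots and dividing by $\sqrt N$ gives (iii), as in \cite{karvonenMaximumLikelihoodEstimation2020}.

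The main obstacle is the band-limited interpolant construction with the sharp bounds in the misspecified case, which drives both (i) and (iii). We will quote it from the literature (Narcowich--Ward--Wendland, and Wendland's Sobolev extension on Lipschitz domains) rather than reprove it. We also need to check that $h_0$ depends only on $\Omega$, $d$ and $\tau$, which follows from the cone-condition constants.
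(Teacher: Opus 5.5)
Your proposal is correct and takes essentially the same approach as the paper, which proves (i)--(iii) only by citation: Teckentrup's Theorem~3.5, the worst-case representation of the power function combined with an $L^\infty$ interpolation bound and $|\Omega|^{1/2}$, and Karvonen et al.'s Propositions~3.1 and~4.5. You unpack the standard arguments behind those citations, namely sampling inequalities, the identity $N\hat\sigma_N^2=\|m_N^f\|_{\mathcal{H}_k(\Omega)}^2$ with minimality of the interpolant, and the Narcowich--Ward--Wendland band-limited interpolant; the one step you leave implicit is $q_{D_N}\le h_{D_N}$ (i.e.\ $\rho_{D_N}\ge 1$), which is needed to absorb the $h_{D_N}^{\tau_f}$ term into $h_{D_N}^{\tau_f}\rho_{D_N}^{\tau-\tau_f}$ in the misspecified case of (i).
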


\begin{proof}
All three bounds specialize known results to the present setting, in which the hyperparameters $\lambda$ and $\nu$ are fixed and the prior mean is zero.

\emph{(i)} This is \cite[Theorem~3.5]{teckentrupConvergenceGaussianProcess2020}, where the choice $\beta = 0$ gives the $L^2$ norm and fixing the hyperparameters removes the dependence on the estimated smoothness.

\emph{(ii)} The posterior covariance $k_N$ in \eqref{eq:posterior_cov} depends only on the kernel $k$ and the design $D_N$, not on $f$, so its rate is governed by the kernel smoothness $\tau$ solely.
Since $\Omega$ is bounded, we have
\[
\|k_N^{1/2}\|_{L^2(\Omega)} \;\leq\; |\Omega|^{1/2}\, \|k_N^{1/2}\|_{L^\infty(\Omega)}.
\]
The uniform bound $\|k_N^{1/2}\|_{L^\infty(\Omega)} \leq C\, h_{D_N}^{\tau - d/2}$ follows from the worst-case representation \cite[Proposition 3.5]{stuartPosteriorConsistencyGaussian2017}
\[
k_N(\omega,\omega)^{1/2} = \sup_{\|g\|_{\mathcal{H}_k(\Omega)} \leq 1} |g(\omega) - m_N^g(\omega)|,
\]
together with the bound in \cite[Theorem 1]{JMLR:v22:20-662} using the $L^\infty(\Omega)$ norm.

\emph{(iii)} The estimated scale is bounded in \cite[Proposition~3.1]{karvonenMaximumLikelihoodEstimation2020} for the well-specified case $\tau_f \geq \tau$, and in \cite[Proposition~4.5]{karvonenMaximumLikelihoodEstimation2020} for the misspecified case $\tau_f < \tau$.
\end{proof}

The three bounds describe different aspects of the same interpolation problem.
The posterior mean error~(i) and posterior variance~(ii) decrease as the design is refined, i.e., $N$ is increased, at rates governed by the kernel smoothness $\tau$ and the target regularity $\tau_f$.
Note that bound~(ii) is stated for the posterior variance of a \emph{unit-variance} kernel, such as the Matérn kernel defined in \eqref{eq:matern_kernel}.
For a kernel with marginal variance $\sigma^2$, the bound scales accordingly, with the constant $C$ depending linearly on $\sigma$ \cite[Remark~3.9]{teckentrupConvergenceGaussianProcess2020}.
In our setting, the scale is estimated, and its impact is captured separately by bound~(iii), which controls its magnitude.

When $\tau_f \geq \tau$, the target function $f$ is at least as smooth as functions from the kernel’s RKHS, which we refer to as the \emph{well-specified} regime.
When the kernel is smoother than the function it approximates, i.e., $\tau > \tau_f$, known as the so-called \emph{misspecified} regime, the exponent $\pos{\tau - \tau_f}$ appearing in bounds~(i) and (iii) quantifies the amount of misspecification.
For bound~(i), this penalty enters through the mesh ratio $\rho_{D_N}$, but can be escaped by using quasi-uniform designs, for which the mesh ratio is bounded independent of $N$.
For the MLE bound~(iii), however, it enters through the separation radius $q_{D_N}$, which behaves as $N^{-1/d}$ under quasi-uniformity, such that the bound reads
\begin{equation}
\label{eq:quasi_uniform_scale}
\hat\sigma_N \lesssim N^{-1/2 + (\tau-\tau_f)_+/d}.
\end{equation}
Thus, in the strongly misspecified regime of $\tau > \tau_f + d/2$, which means that $f$ is rougher than $\mathrm{GP}(0, k)$ samples \cite{Steinwart2019, kanagawaGaussianProcessesKernel2018}, the exponent of \eqref{eq:quasi_uniform_scale} becomes positive such that the estimated marginal variance does not need to decay with $N$.
Under additional regularity assumptions on $f$, this rate in bound~(iii) is sharp up to logarithmic factors \cite{karvonenMaximumLikelihoodEstimation2020}.

Since the marginal variances are not known but estimated in this work, the
relevant measure of posterior uncertainty is not solely the posterior variance but its product with the estimated scale, $\hat\sigma_N\, k_N^{1/2}$.
The following corollary combines bounds~(ii) and~(iii) into a single rate for this quantity, specialized to quasi-uniform designs.

\begin{corollary}[MLE-scaled posterior variance]
\label{cor:mle_scaled_var}
Let the assumptions of Theorem \ref{thm:gp_error} hold, and let $D_N$ be quasi-uniform.
Then there exists $C > 0$ independent of $f$ and $N$ such that
\[
\|\hat\sigma_N\, k_N^{1/2}\|_{L^2(\Omega)}
\;\leq\; C\, N^{-\min(\tau, \tau_f)/d}\, \|f\|_{W_2^{\tau_f}(\Omega)}.
\]
\end{corollary}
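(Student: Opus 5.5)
Since $\hat\sigma_N$ is a scalar, we have $\|\hat\sigma_N\, k_N^{1/2}\|_{L^2(\Omega)} = \hat\sigma_N \|k_N^{1/2}\|_{L^2(\Omega)}$. The plan is to bound the two factors with Theorem~\ref{thm:gp_error}(ii) and~(iii), convert everything to powers of $N$ using quasi-uniformity, and add the exponents. Quasi-uniformity gives $h_{D_N} \asymp q_{D_N} \asymp N^{-1/d}$ by \cite[Proposition~14.1]{Wendland2004}. Hence $h_{D_N}\le h_0$ for all $N$ beyond some $N_0$, and Theorem~\ref{thm:gp_error} applies there. Bound~(ii) then yields $\|k_N^{1/2}\|_{L^2(\Omega)} \lesssim N^{-(\tau-d/2)/d} = N^{-\tau/d + 1/2}$. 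Bound~(iii), in the form \eqref{eq:quasi_uniform_scale}, yields $\hat\sigma_N \lesssim N^{-1/2 + (\tau-\tau_f)_+/d}\|f\|_{W_2^{\tau_f}(\Omega)}$. The constants depend only on $\Omega$, $d$, $\lambda$, $\nu$ and the mesh-ratio bound, not on $f$ or $N$.

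Multiplying the two bounds, the factors $N^{1/2}$ and $N^{-1/2}$ cancel, leaving the exponent $-\tau/d + (\tau-\tau_f)_+/d$. This is where the two regimes are handled.
\begin{itemize}
\item If $\tau_f \ge \tau$, the positive part vanishes and the exponent is $-\tau/d = -\min(\tau,\tau_f)/d$.
\item If $\tau_f<\tau$, the exponent is $(-\tau+\tau-\tau_f)/d = -\tau_f/d = -\min(\tau,\tau_f)/d$.
\end{itemize}
So in both cases we obtain $C N^{-\min(\tau,\tau_f)/d}\|f\|_{W_2^{\tau_f}(\Omega)}$ for $N\ge N_0$.

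The remaining issue is the finitely many $N<N_0$, where $h_{D_N}>h_0$ and Theorem~\ref{thm:gp_error} does not directly apply. The estimate $k_N(\omega,\omega)\le k(\omega,\omega)=1$ gives $\|k_N^{1/2}\|_{L^2(\Omega)}\le|\Omega|^{1/2}$. For the scale, $\hat\sigma_N^2 \le N^{-1}\|f\|_{\mathcal H_k}^2$ only when $f$ lies in the RKHS, so this route fails in the misspecified case. The main obstacle is therefore controlling $\hat\sigma_N$ uniformly for small $N$. Quasi-uniformity gives $q_{D_N}\gtrsim N_0^{-1/d}$, so the designs are uniformly separated. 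The argument behind \cite[Proposition~4.5]{karvonenMaximumLikelihoodEstimation2020} then bounds $\hat\sigma_N$ by a constant times $\|f\|_{W_2^{\tau_f}(\Omega)}$: it uses an RKHS band-limited interpolant whose norm is controlled via $q_{D_N}$, and that argument does not need $h_{D_N}$ small. Alternatively, one can simply note that the corollary, like the theorem, is meant asymptotically for $h_{D_N}\le h_0$. In either case we enlarge $C$ by a factor $N_0^{\min(\tau,\tau_f)/d}$ so that the bound also covers $N<N_0$.
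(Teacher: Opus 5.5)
Your proposal is correct and is essentially the paper's proof: factor out the scalar $\hat\sigma_N$, use Theorem~\ref{thm:gp_error}(ii) and (iii) with $h_{D_N}\asymp q_{D_N}\asymp N^{-1/d}$ to get the factors $N^{-\tau/d+1/2}$ and $N^{-1/2+(\tau-\tau_f)_+/d}\|f\|_{W_2^{\tau_f}(\Omega)}$, and combine them via $(\tau-\tau_f)_+-\tau=-\min(\tau,\tau_f)$. Your extra treatment of $N<N_0$ goes beyond the paper, which reads $h_{D_N}\le h_0$ as part of ``the assumptions of Theorem~\ref{thm:gp_error}'' and later enforces it through $N_\ell\ge N_{\min}$; if you keep that step, the uniform bound $\hat\sigma_N\lesssim\|f\|_{W_2^{\tau_f}(\Omega)}$ for those finitely many $N$ follows more cleanly from $\lambda_{\min}(k(D_N,D_N))\gtrsim q_{D_N}^{2\tau-d}$ and the embedding $W_2^{\tau_f}(\Omega)\hookrightarrow L^\infty(\Omega)$ than from your loosely described Karvonen argument.
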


\begin{proof}
We have $\|\hat\sigma_N \, k_N^{1/2}\|_{L^2(\Omega)}
= \hat\sigma_N\, \|k_N^{1/2}\|_{L^2(\Omega)}$ and we bound the two factors using Theorem \ref{thm:gp_error}.
For the posterior standard deviation, the variance bound from part~(ii) of Theorem \ref{thm:gp_error} together with the quasi-uniformity $h_{D_N} \asymp N^{-1/d}$ and $\rho_{D_N} \lesssim 1$ gives
\[
\|k_N^{1/2}\|_{L^2(\Omega)}
\;\lesssim\; N^{-\tau / d + 1/2}.
\]
For the estimated scale, the MLE bound from part~(iii) of the same theorem, together with $q_{D_N} \asymp N^{-1/d}$, yields
\[
\hat\sigma_N \lesssim\; N^{-1/2}\, q_{D_N}^{-\pos{\tau - \tau_f}} \,\|f\|_{W_2^{\tau_f}(\Omega)} \lesssim\; N^{-1/2 \,+\, \pos{\tau - \tau_f}/d}\,
\|f\|_{W_2^{\tau_f}(\Omega)}.
\]
Multiplying the bounds and noting that
\[
\pos{\tau - \tau_f} - \tau = - \min(\tau, \tau_f)
\]
yields the claim.
\end{proof}

Note that the rate $\min(\tau,\tau_f)/d$ coincides with the convergence rate of the posterior mean in bound~(i) of Theorem \ref{thm:gp_error}.
The scaled posterior standard deviation therefore decreases at the same rate as the approximation error itself.
In the well-specified case $\tau_f \geq \tau$ this rate is $\tau/d$, while for $\tau_f < \tau$ it reduces to $\tau_f/d$.


\section{Multilevel Gaussian Processes}
\label{sec:mlgp}

In practice, the exact response surface $f$ introduced in Section \ref{sec:intro} is not directly accessible, as its evaluation requires the exact solution of the underlying differential equation.
Instead, one has usually access to a sequence of approximate response surfaces
\[
f_n : \Omega \to \mathbb{R}, \qquad
f_n(\omega) := Q(u_n(\cdot,\omega)),
\]
where $u_n(\cdot,\omega)$ is the numerical solution obtained with discretization parameter $n \in \mathbb{N}$.
For instance, $n$ could denote the number of finite elements or the number of grid points in a finite difference scheme.
As the discretization parameter $n$ increases, $f_n$ converges to $f$ in an appropriate sense, but at the expense of a higher cost per evaluation.
Since an accurate GP approximation requires a large sample size $N$ of evaluations of a sufficiently finely discretized response $f_n$, this can quickly become a computational bottleneck.
To overcome this, we employ the multilevel approach, which combines many cheap and coarse evaluations with a small number of expensive and fine samples, and we show that both the posterior mean and variance can be controlled efficiently.

To this end, consider $L + 1 \in \mathbb{N}$ levels and an increasing sequence of discretization parameters $n_0 < n_1 < \cdots < n_L$.
Typically, we will use the sequence $n_\ell = \lceil C_n M^\ell \rceil$ for $\ell \in \N_0$ with a refinement factor $M \in \N_{>1}$ and a constant $C_n > 0$, where $\lceil x \rceil$ denotes the smallest integer not below $x$.
Writing $f_\ell := f_{n_\ell}$ for $\ell \in \{0, \ldots, L\}$, the most accurate response surface $f_L \approx f$ can be expressed as the
telescoping sum
\begin{equation}\label{eq:telescop}
f_L = f_0 + \sum_{\ell=1}^L f_\ell - f_{\ell-1}.
\end{equation}
The idea behind the telescoping identity is that even a coarse approximation $f_0$ captures the qualitative behavior of $f$, while the successive difference terms $f_\ell - f_{\ell-1}$ correct increasingly fine details.
Rather than approximating $f$ from the left-hand side of~\eqref{eq:telescop}, which would require a single fine discretization for the GP construction, we exploit the right-hand side and build a separate GP for each level.
To unify notation, we define $\Delta_0 := f_0$ and the level increments
\[
\Delta_\ell := f_\ell - f_{\ell-1}, \quad \ell \in \{1, \ldots, L\},
\]
such that $f_L = \sum_{\ell=0}^L \Delta_\ell$.
For each level $\ell \in \{0, \ldots, L\}$, we then assign an independent GP prior
\[
\Delta_{\ell}^{\mathrm{GP}} \sim \mathrm{GP}(0,\, k^\ell),
\]
where
\begin{equation}
\label{eq:unit_var_matern}
k^\ell(\omega, \omega') := \sigma_\ell^2\, k(\omega, \omega')
\end{equation}
is a unit-variance Matérn kernel $k$ \eqref{eq:matern_kernel} with length scale $\lambda$ and smoothness $\nu$, scaled with level-dependent marginal variance $\sigma_\ell^2 > 0$.
The zero-mean prior is natural in this setting, since the discretized responses converge, $f_\ell \to f$ as $\ell \to \infty$, so the increments $\Delta_\ell = f_\ell - f_{\ell-1}$ vanish accordingly.
Consecutive levels therefore carry no systematic offset, and $\Delta_\ell$ concentrates increasingly around zero.
What remains to be captured is the typical magnitude of these increments, which is encoded in the marginal variance $\sigma_\ell^2$.
Since the increments $\Delta_\ell$ decrease in a suitable norm with $\ell$, the corresponding marginal variances are expected to shrink across levels.
Note that this construction is different from the autoregressive multi-fidelity model of Kennedy and O'Hagan~\cite{Kennedy2000}, in which the fine-level GP is  obtained by conditioning on the coarse-level output, yielding correlated level models and a coupled posterior update.
The present formulation instead follows the classical multilevel Monte Carlo approach, where each level models the incremental correction $\Delta_\ell$ independently.

To update the prior GPs using data for each level, we consider level-dependent design sets, i.e., on level $\ell \in \{0, \ldots, L\}$ the design set
\[
D_\ell := \{\omega_{\ell,1}, \ldots, \omega_{\ell,N_\ell}\} \subset \Omega
\]
of size $N_\ell \in \N$ with corresponding response increment evaluations
$\Delta_\ell(D_\ell) \in \mathbb{R}^{N_\ell}$.
Conditioning the GP prior $\Delta_{\ell}^\mathrm{GP}$ on the data yields the level-$\ell$ posterior
\[
\Delta_\ell^\mathrm{GP} \mid \Delta_\ell(D_\ell)
\;\sim\; \mathrm{GP}\!\left(m_\ell,\, k_\ell\right),
\]
where, using the update formulas \eqref{eq:posterior_mean} and \eqref{eq:posterior_cov}, it follows that
\begin{align}
    m_\ell(\omega)
    &= k^\ell(\omega, D_\ell)^\top k^\ell(D_\ell, D_\ell)^{-1} \Delta_\ell(D_\ell),
    \label{eq:posterior_mean_level_l} \\
    k_\ell(\omega, \omega')
    &= k^\ell(\omega, \omega')
       - k^\ell(\omega, D_\ell)^\top k^\ell(D_\ell, D_\ell)^{-1} k^\ell(D_\ell, \omega').
    \label{eq:posterior_cov_level_l}
\end{align}
Each scale $\sigma_\ell^2$ in \eqref{eq:unit_var_matern} is estimated from the level data $\Delta_\ell(D_\ell)$ via MLE~\eqref{eq:mle}.
Note that the scale cancels out in \eqref{eq:posterior_mean_level_l}, so that the level posterior mean $m_\ell$ is independent of $\sigma_\ell$.
In particular, Theorem \ref{thm:gp_error} (i) also applies to $m_\ell$ with the estimated scale.
After conditioning each level separately, we can now put the levels together into one multilevel surrogate.
By the telescoping representation \eqref{eq:telescop}, the finest response is the sum of increments, $f_L = \sum_{\ell=0}^L \Delta_\ell$, which we approximate by the corresponding sum of level GPs, i.e., $f_L^{\mathrm{GP}} \coloneqq \sum_{\ell=0}^L \Delta_\ell^{\mathrm{GP}}$.
Since the level priors are independent, conditioning on all level data $\{\Delta_\ell(D_\ell)\}$ yields a multilevel posterior that is again a GP, namely
\begin{equation}
\label{eq:mlgp_posterior}
f_L^{\mathrm{GP}} \,\big|\, \{\Delta_\ell(D_\ell)\}_{\ell=0}^L 
\,\sim\, \mathrm{GP}\!\left(\bar{m}^f,\, \bar{k}\right).
\end{equation}
Its mean and covariance are obtained by summing those of the level posteriors,
\[
\bar{m}^f = \sum_{\ell=0}^L m_\ell,
\qquad
\bar{k} = \sum_{\ell=0}^L k_\ell.
\]
The mean $\bar{m}^f$ is the multilevel GP surrogate for $f_L$, and the covariance $\bar{k}$ provides the associated predictive uncertainty.

\subsection{Error analysis}
\label{sec:error_analysis}

We now investigate the multilevel posterior mean error and the posterior variance.
The analysis relies on the following assumptions, which are supposed to hold throughout the rest of this work.

\medskip
\textbf{Assumptions.}
\begin{enumerate}[label=\textbf{(A\arabic*)}, ref=\textbf{(A\arabic*)}]
    \item\label{ass:matern}
        The Matérn kernel $k$ has smoothness $\nu > 0$ such that $\mathcal{H}_k(\Omega) \simeq W_2^\tau(\Omega)$ for $\tau = \nu + d/2$.
    \item\label{ass:solver}
        The numerical solver satisfies the convergence estimate
        \[
            \| f - f_\ell \|_{L^2(\Omega)} \lesssim n_\ell^{-\alpha} \qquad \text{for all } \ell \in \N_0,
        \]
        for some $\alpha > 0$.
    \item\label{ass:increment}
        The level increment $\Delta_\ell = f_\ell - f_{\ell - 1}$ lies in $W_2^{\tau_\Delta}(\Omega)$ for some $\tau_\Delta > d/2$, and
        \[
            \| \Delta_\ell \|_{W_2^{\tau_\Delta}(\Omega)} \lesssim n_\ell^{-\beta} \qquad \text{for all } \ell \in \N_0,
        \]
        for some $0 < \beta \leq \alpha$.
    \item\label{ass:cost}
        The cost of evaluating $\Delta_\ell(\omega)$ at a single point satisfies
        \[
            \mathrm{Cost}(\Delta_\ell) \lesssim n_\ell^{\gamma} \qquad \text{for all } \ell \in \N_0,
        \]
        for some $\gamma > 0$.
\end{enumerate}
\medskip

Assumption \ref{ass:matern} links the kernel smoothness to Sobolev regularity and is standard in the analysis of kernel methods \cite{Wendland2004}, while \ref{ass:solver} quantifies the bias incurred when replacing $f$ by a level-$\ell$ model.
Assumption \ref{ass:increment} characterizes the regularity and decay of the level increments.
The parameter $\beta$ governs the rate at which the increment norms decay with increasing discretization.
The restriction $\beta \leq \alpha$ is no loss of generality, since by \ref{ass:increment} and the telescoping sum \eqref{eq:telescop}, we have
\[
\|f - f_\ell\|_{L^2(\Omega)} \leq \sum_{k > \ell} \|\Delta_k\|_{W_2^{\tau_\Delta}(\Omega)} \lesssim n_\ell^{-\beta},
\]
such that \ref{ass:solver} also holds with rate $\beta$.
Thus we have $\beta \leq \alpha$ whenever $\alpha$ denotes the sharp rate of the solver.
Assumption \ref{ass:cost} models the increasing cost of higher-resolution simulations and is standard in multilevel analyses \cite{Giles2008}.

In the following, we consider a sequence of designs $(D_N)_{N \in \N} \subset \Omega$.
Since Theorem \ref{thm:gp_error} requires a sufficiently small fill distance $h_{D_N} \leq h_0$, we set
\[
N_{\min} \coloneqq \min \{ N \in \N : h_{D_M} \leq h_0 \text{ for all } M \geq N \},
\]
which, using quasi-uniform designs, satisfies $N_{\min} \asymp h_0^{-d}$.

\begin{proposition}[Multilevel GP error bounds]
\label{prop:mlgp_error}
With quasi-uniform designs $D_{N_\ell}$ of size $N_\ell \geq N_{\min}$ on each level and marginal variances estimated by maximum likelihood, the multilevel GP posterior mean $\bar{m}^f$ and covariance
$\bar{k}$ satisfy
\begin{align}
\| f - \bar{m}^f \|_{L^2(\Omega)}
&\lesssim
n_L^{-\alpha} + \sum_{\ell=0}^L N_\ell^{-\min(\tau,\,\tau_\Delta)/d}\, n_\ell^{-\beta},
\label{eq:mlgp_mean_bound}\\[4pt]
\| \bar{k}^{1/2} \|_{L^2(\Omega)}
&\lesssim
\sum_{\ell=0}^L N_\ell^{-\min(\tau,\,\tau_\Delta)/d}\, n_\ell^{-\beta}.
\label{eq:mlgp_variance_bound}
\end{align}
\end{proposition}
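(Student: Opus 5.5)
For the mean bound \eqref{eq:mlgp_mean_bound}, I will split the error with the triangle inequality into a bias term and a level-wise interpolation term:
\[
\|f - \bar m^f\|_{L^2(\Omega)} \le \|f - f_L\|_{L^2(\Omega)} + \Bigl\|\sum_{\ell=0}^L (\Delta_\ell - m_\ell)\Bigr\|_{L^2(\Omega)} \le \|f - f_L\|_{L^2(\Omega)} + \sum_{\ell=0}^L \|\Delta_\ell - m_\ell\|_{L^2(\Omega)}.
\]
This uses $f_L = \sum_\ell \Delta_\ell$ from \eqref{eq:telescop} and $\bar m^f = \sum_\ell m_\ell$. Assumption \ref{ass:solver} bounds the first term by $n_L^{-\alpha}$.

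For each summand I will apply Theorem \ref{thm:gp_error}(i) with $f$ replaced by $\Delta_\ell$ and $\tau_f = \tau_\Delta$. This is justified because $\Delta_\ell \in W_2^{\tau_\Delta}(\Omega)$ with $\tau_\Delta > d/2$ by \ref{ass:increment}, and because $N_\ell \ge N_{\min}$ gives $h_{D_{N_\ell}} \le h_0$. The level mean $m_\ell$ does not depend on the estimated scale, as noted after \eqref{eq:posterior_cov_level_l}, so the theorem applies directly. Quasi-uniformity gives $h_{D_{N_\ell}} \asymp N_\ell^{-1/d}$ and $\rho_{D_{N_\ell}} \lesssim 1$, so the mesh-ratio factor is a constant. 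Hence
\[
\|\Delta_\ell - m_\ell\|_{L^2(\Omega)} \lesssim N_\ell^{-\min(\tau,\tau_\Delta)/d}\, \|\Delta_\ell\|_{W_2^{\tau_\Delta}(\Omega)} \lesssim N_\ell^{-\min(\tau,\tau_\Delta)/d}\, n_\ell^{-\beta},
\]
where the last step uses \ref{ass:increment}. The constants in Theorem \ref{thm:gp_error} do not depend on $f$ or the design, and the quasi-uniformity constants are uniform across levels. The hidden constant is therefore independent of $\ell$, $L$ and the $N_\ell$.

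For the variance bound \eqref{eq:mlgp_variance_bound}, I will work with the MLE-scaled level covariances $k_\ell = \hat\sigma_{N_\ell}^2 k_{N_\ell}$, where $k_{N_\ell}$ is the posterior kernel of the unit-variance Matérn kernel on $D_{N_\ell}$. Since $\bar k(\omega,\omega) = \sum_\ell k_\ell(\omega,\omega)$ is a sum of nonnegative terms, the elementary inequality $\sqrt{\sum a_\ell} \le \sum \sqrt{a_\ell}$ gives the pointwise bound $\bar k(\omega,\omega)^{1/2} \le \sum_\ell k_\ell(\omega,\omega)^{1/2}$. Taking $L^2$ norms and applying Minkowski yields
\[
\|\bar k^{1/2}\|_{L^2(\Omega)} \le \sum_{\ell=0}^L \|\hat\sigma_{N_\ell} k_{N_\ell}^{1/2}\|_{L^2(\Omega)}.
\]
Corollary \ref{cor:mle_scaled_var}, applied with $f = \Delta_\ell$ and $\tau_f = \tau_\Delta$, bounds each term by $N_\ell^{-\min(\tau,\tau_\Delta)/d}\|\Delta_\ell\|_{W_2^{\tau_\Delta}(\Omega)}$. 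Then \ref{ass:increment} turns this into $N_\ell^{-\min(\tau,\tau_\Delta)/d}\, n_\ell^{-\beta}$.

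The only delicate point is bookkeeping rather than analysis. I need to check that the constants from Theorem \ref{thm:gp_error} and Corollary \ref{cor:mle_scaled_var} are uniform over levels, which holds because they depend only on $\Omega$, $d$, $\lambda$, $\nu$ and the quasi-uniformity constant. I also need to check that the MLE on level $\ell$ is the closed form \eqref{eq:mle} computed from $\Delta_\ell(D_\ell)$, so that Theorem \ref{thm:gp_error}(iii) applies with $\|\Delta_\ell\|_{W_2^{\tau_\Delta}(\Omega)}$ in place of $\|f\|_{W_2^{\tau_f}(\Omega)}$.
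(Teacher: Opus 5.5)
Your proposal is correct and follows the paper's proof essentially step for step. The mean bound uses the same bias/telescoping split with Theorem~\ref{thm:gp_error}(i) applied levelwise, and the variance bound uses the same levelwise application of Corollary~\ref{cor:mle_scaled_var} to $k_\ell = \hat\sigma_\ell^2 k_{N_\ell}$; your pointwise inequality $\bar k(\omega,\omega)^{1/2} \le \sum_\ell k_\ell(\omega,\omega)^{1/2}$ followed by Minkowski is exactly what justifies the step the paper simply calls the triangle inequality.
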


\begin{proof}
Throughout, we write $h_\ell$, $q_\ell$, and $\rho_\ell$ for $h_{D_{N_\ell}}$, $q_{D_{N_\ell}}$, and $\rho_{D_{N_\ell}}$, respectively, and use that quasi-uniform designs satisfy $h_\ell \asymp q_\ell \asymp N_\ell^{-1/d}$ and $\rho_\ell \lesssim 1$.

\emph{Posterior mean.} Inserting the finest discretization $f_L$ and applying
the triangle inequality yields
\[
\| f - \bar{m}^f \|_{L^2(\Omega)}
\leq
\| f - f_L \|_{L^2(\Omega)} + \| f_L - \bar{m}^f \|_{L^2(\Omega)}.
\]
By Assumption \ref{ass:solver}, the discretization error satisfies
$\| f - f_L \|_{L^2(\Omega)} \lesssim n_L^{-\alpha}$. For the GP approximation error $\| f_L - \bar{m}^f \|_{L^2(\Omega)}$, the telescoping structure of $f_L = \sum_{\ell=0}^L \Delta_\ell$ and
$\bar{m}^f = \sum_{\ell=0}^L m_\ell$, together with the triangle inequality, and the mean bound (i) of Theorem \ref{thm:gp_error} applied to each increment
$\Delta_\ell \in W_2^{\tau_\Delta}(\Omega)$ gives
\[
\| f_L - \bar{m}^f \|_{L^2(\Omega)}
\leq \sum_{\ell=0}^L \|\Delta_\ell - m_\ell\|_{L^2(\Omega)}
\lesssim \sum_{\ell=0}^L
h_\ell^{\min(\tau,\,\tau_\Delta)}\, \rho_\ell^{\pos{\tau - \tau_\Delta}}\,
\|\Delta_\ell\|_{W_2^{\tau_\Delta}(\Omega)}.
\]
Using quasi-uniformity and Assumption \ref{ass:increment}, that is $\|\Delta_\ell\|_{W_2^{\tau_\Delta}(\Omega)} \lesssim n_\ell^{-\beta}$, yields
\[
\| f_L - \bar{m}^f \|_{L^2(\Omega)}
\lesssim \sum_{\ell=0}^L N_\ell^{-\min(\tau,\,\tau_\Delta)/d}\, n_\ell^{-\beta}.
\]

\emph{Posterior variance.} Since the levelwise GP priors are independent, the
multilevel posterior covariance is additive, that is $\bar{k} = \sum_{\ell=0}^L \hat\sigma_\ell^2\, k_{N_\ell}$, where
$k_{N_\ell}$ is the posterior kernel of the unit-variance Matérn
kernel at level $\ell$. By the triangle inequality,
\[
\| \bar{k}^{1/2} \|_{L^2(\Omega)}
\leq \sum_{\ell=0}^L \|\hat\sigma_\ell\, k_{N_\ell}^{1/2}\|_{L^2(\Omega)}.
\]
Applying Corollary \ref{cor:mle_scaled_var} levelwise to each increment $\Delta_\ell$, together with Assumption \ref{ass:increment}, gives
\[
\|\hat\sigma_\ell\, k_{N_\ell}^{1/2}\|_{L^2(\Omega)}
\lesssim N_\ell^{-\min(\tau,\tau_\Delta)/d}\, \|\Delta_\ell\|_{W_2^{\tau_\Delta}(\Omega)}
\lesssim N_\ell^{-\min(\tau,\tau_\Delta)/d}\, n_\ell^{-\beta}.
\]
\end{proof}

The two bounds in Proposition \ref{prop:mlgp_error} can also be combined into a single error measure that is natural for a probabilistic surrogate.
Since $f_L^{\mathrm{GP}}(\omega) \sim \mathcal{N}\bigl(\bar m^f(\omega),
\bar k(\omega,\omega)\bigr)$ for every $\omega \in \Omega$, the mean squared error of the multilevel surrogate decomposes into bias and variance,
\begin{equation}
\label{eq:mse}
\E \Bigl[\bigl\|f - f_L^{\mathrm{GP}}
\bigr\|_{L^2(\Omega)}^2\Bigr]
= \bigl\|f - \bar m^f\bigr\|_{L^2(\Omega)}^2
+ \bigl\|\bar k^{1/2}\bigr\|_{L^2(\Omega)}^2.
\end{equation}
This error measure accounts for the accuracy of the posterior mean and for the uncertainty simultaneously.

\begin{corollary}[Multilevel root mean squared error]
\label{cor:mlgp_mse}
Under the assumptions of Proposition \ref{prop:mlgp_error}, the mean squared error
\eqref{eq:mse} satisfies
\begin{equation}
\label{eq:mse_bound}
\E \Bigl[\bigl\|f - f_L^{\mathrm{GP}}
\bigr\|_{L^2(\Omega)}^2\Bigr]^{1/2} \;\lesssim\;
n_L^{-\alpha} + \sum_{\ell=0}^L N_\ell^{-\min(\tau, \tau_\Delta)/d}\, n_\ell^{-\beta}.
\end{equation}
\end{corollary}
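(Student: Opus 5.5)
The plan is to combine the bias--variance decomposition \eqref{eq:mse} with the two bounds of Proposition~\ref{prop:mlgp_error}. Since \eqref{eq:mse} expresses the mean squared error exactly as the sum of the squared $L^2$ posterior-mean error and the integrated posterior variance, taking square roots gives
\[
\E \Bigl[\bigl\|f - f_L^{\mathrm{GP}}\bigr\|_{L^2(\Omega)}^2\Bigr]^{1/2}
= \Bigl( \|f - \bar m^f\|_{L^2(\Omega)}^2 + \|\bar k^{1/2}\|_{L^2(\Omega)}^2 \Bigr)^{1/2}
\leq \|f - \bar m^f\|_{L^2(\Omega)} + \|\bar k^{1/2}\|_{L^2(\Omega)},
\]
using the elementary inequality $\sqrt{a^2+b^2} \leq a + b$ for $a,b \geq 0$.

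For completeness, I would briefly justify \eqref{eq:mse}. For each $\omega$, $f_L^{\mathrm{GP}}(\omega)$ is Gaussian with mean $\bar m^f(\omega)$ and variance $\bar k(\omega,\omega)$. Hence $\E[(f(\omega) - f_L^{\mathrm{GP}}(\omega))^2] = (f(\omega)-\bar m^f(\omega))^2 + \bar k(\omega,\omega)$, and integrating over $\Omega$ (Tonelli, since the integrand is nonnegative) yields \eqref{eq:mse}.

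Next, I would insert \eqref{eq:mlgp_mean_bound} for the first term and \eqref{eq:mlgp_variance_bound} for the second. Both right-hand sides contain the same sum $\sum_{\ell=0}^L N_\ell^{-\min(\tau,\tau_\Delta)/d} n_\ell^{-\beta}$, so adding them gives $n_L^{-\alpha} + 2\sum_{\ell} N_\ell^{-\min(\tau,\tau_\Delta)/d} n_\ell^{-\beta}$. The factor $2$ is absorbed into the implicit constant of $\lesssim$.

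There is no real obstacle here. The only point needing care is that the expectation is taken with respect to the posterior GP with the MLE-estimated scales plugged in, which is exactly the covariance $\bar k$ controlled in Proposition~\ref{prop:mlgp_error}. The constants are therefore consistent and independent of the $N_\ell$ and $n_\ell$.
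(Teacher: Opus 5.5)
Your proposal is correct and follows the paper's proof: apply $\sqrt{a^2+b^2}\leq a+b$ to the bias--variance decomposition \eqref{eq:mse}, then insert the two bounds of Proposition~\ref{prop:mlgp_error}, which share the same sum. Your pointwise justification of \eqref{eq:mse} via Tonelli is a harmless addition; the paper asserts the decomposition without proof.
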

 
\begin{proof}
Since $\sqrt{a^2 + b^2} \leq a + b$ for $a, b \geq 0$, we have
\[
\E \Bigl[\bigl\|f - f_L^{\mathrm{GP}}
\bigr\|_{L^2(\Omega)}^2\Bigr]^{1/2} \leq \bigl\|f - \bar m^f\bigr\|_{L^2(\Omega)}
+ \bigl\|\bar k^{1/2}\bigr\|_{L^2(\Omega)},
\]
and the two terms are bounded by \eqref{eq:mlgp_mean_bound} and \eqref{eq:mlgp_variance_bound}, which share the same rate.
\end{proof}

The bound \eqref{eq:mse_bound} separates the discretization bias $n_L^{-\alpha}$ at the finest level from the GP variance accumulated across levels.
Note that the posterior variance $\bar k$ only quantifies the sampling error $f_L - \bar m^f$, so that it is an error indicator for $f_L$ rather than $f$.
Reaching a prescribed accuracy requires balancing the
finest discretization $n_L$ against the per-level sample sizes
$\{N_\ell\}_{\ell=0}^L$, subject to the available computational budget.
For brevity, we write $\mu \coloneqq \min(\tau, \tau_\Delta)/d$ for the common
convergence rate of the multilevel posterior mean and variance.

\subsection{Complexity}
\label{sec:complexity}

In order to analyze the complexity of the multilevel GP method, we define its computational cost by the costs required for function evaluations across all levels,
\begin{equation}
    \label{eq:ml_costs}
    \mathcal{C}^{\mathrm{ML}} \coloneqq \sum_{\ell=0}^L N_\ell\,\mathrm{Cost}(\Delta_\ell).
\end{equation}

Note that \eqref{eq:ml_costs} counts only the increment evaluations and not the cost of constructing the GPs, that is, of assembling and solving linear systems across the levels.
In the present setting, where each evaluation requires the solution of a PDE, we assume these additional costs to be negligible compared to $\mathrm{Cost}(\Delta_\ell)$.

\begin{theorem}[Multilevel complexity]
\label{thm:complexity}
For every tolerance $0 < \varepsilon < \mathrm{e}^{-1}$ there exist a number of levels $L$ and sample sizes
$\{N_\ell\}_{\ell=0}^L$ such that
\[
\E \Bigl[\bigl\|f - f_L^{\mathrm{GP}}
\bigr\|_{L^2(\Omega)}^2\Bigr]^{1/2} \lesssim \varepsilon,
\]
with total computational cost bounded by
\begin{equation}\label{eq:complexity}
\mathcal{C}^{\mathrm{ML}}_\varepsilon \;\lesssim\;
\begin{cases}
\varepsilon^{-\frac{1}{\mu}}, & \beta > \gamma\mu,\\[4pt]
\varepsilon^{-\frac{1}{\mu}}\,|\log \varepsilon|^{1+\frac{1}{\mu}}, & \beta = \gamma\mu,\\[4pt]
\varepsilon^{-\left(1 - \frac{\beta}{\alpha}\right)\frac{1}{\mu} - \frac{\gamma}{\alpha}}, & \beta < \gamma\mu.
\end{cases}
\end{equation}
\end{theorem}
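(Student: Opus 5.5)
The plan is the standard multilevel Monte Carlo complexity argument, with the rate $\mu$ of Corollary \ref{cor:mlgp_mse} in place of the usual Monte Carlo rate $1/2$. By \eqref{eq:mse_bound} it suffices to make both the bias term $n_L^{-\alpha}$ and the sampling term $\sum_{\ell=0}^L N_\ell^{-\mu} n_\ell^{-\beta}$ at most $\varepsilon/2$, while keeping the cost \eqref{eq:ml_costs} small. Here $\mathrm{Cost}(\Delta_\ell)\lesssim n_\ell^\gamma$ by \ref{ass:cost}.

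\emph{Choice of $L$.} With $n_\ell = \lceil C_n M^\ell\rceil \asymp M^\ell$, I take $L$ to be the smallest integer with $n_L^{-\alpha}\lesssim \varepsilon/2$. This gives $n_L \asymp \varepsilon^{-1/\alpha}$ and $L+1 \asymp |\log\varepsilon|$. The restriction $\varepsilon<\mathrm{e}^{-1}$ guarantees $|\log\varepsilon|\geq 1$, so $L+1\lesssim|\log\varepsilon|$ holds uniformly.

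\emph{Choice of $N_\ell$.} I minimize $\sum_\ell N_\ell n_\ell^\gamma$ subject to $\sum_\ell N_\ell^{-\mu}n_\ell^{-\beta}=\varepsilon/2$, treating the $N_\ell$ as continuous variables. A Lagrange multiplier computation gives $N_\ell = K\, n_\ell^{-(\beta+\gamma)/(1+\mu)}$. With this choice, both the level error $N_\ell^{-\mu}n_\ell^{-\beta} = K^{-\mu} n_\ell^{\theta}$ and the level cost $N_\ell n_\ell^\gamma = K n_\ell^{\theta}$ are governed by the same exponent
\[
\theta := \frac{\gamma\mu-\beta}{1+\mu}.
\]
Set $S_L := \sum_{\ell=0}^L n_\ell^{\theta}$. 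Choosing $K = (2S_L/\varepsilon)^{1/\mu}$ makes the sampling error equal to $\varepsilon/2$, and the resulting cost is
\[
K S_L \lesssim \varepsilon^{-1/\mu} S_L^{1+1/\mu}.
\]
The three cases now come from the geometric sum $S_L$.
\begin{itemize}
\item If $\beta>\gamma\mu$, then $\theta<0$ and $S_L\lesssim 1$, so the cost is $\lesssim\varepsilon^{-1/\mu}$.
\item If $\beta=\gamma\mu$, then $\theta=0$ and $S_L = L+1 \lesssim |\log\varepsilon|$, which gives the factor $|\log\varepsilon|^{1+1/\mu}$.
\item If $\beta<\gamma\mu$, then $S_L\asymp n_L^{\theta} \asymp \varepsilon^{-\theta/\alpha}$. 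Hence $S_L^{(1+\mu)/\mu}\asymp \varepsilon^{-(\gamma\mu-\beta)/(\alpha\mu)}$, and combining with $\varepsilon^{-1/\mu}$ gives the exponent $-(1-\beta/\alpha)/\mu-\gamma/\alpha$.
\end{itemize}

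\emph{Admissibility of the sample sizes.} The main obstacle is that the $N_\ell$ must be integers with $N_\ell\geq N_{\min}$, so that Proposition \ref{prop:mlgp_error} applies, and that quasi-uniform designs of each size must exist. I therefore actually use $N_\ell := \max\{N_{\min},\lceil K n_\ell^{-(\beta+\gamma)/(1+\mu)}\rceil\}$. Increasing $N_\ell$ only decreases the error bound, so the accuracy is preserved. The extra cost is at most $(N_{\min}+1)\sum_\ell n_\ell^\gamma \lesssim n_L^\gamma\asymp\varepsilon^{-\gamma/\alpha}$, since $N_{\min}\asymp h_0^{-d}$ is a fixed constant.

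It remains to check that this extra cost is dominated by the main term in each case.
\begin{itemize}
\item If $\beta\geq\gamma\mu$, then $\gamma/\alpha\leq\gamma/\beta\leq 1/\mu$, using $\beta\leq\alpha$.
\item If $\beta<\gamma\mu$, then $\gamma/\alpha\leq (1-\beta/\alpha)/\mu+\gamma/\alpha$ trivially.
\end{itemize}

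Finally, I combine these bounds with Corollary \ref{cor:mlgp_mse} to conclude that the root mean squared error is $\lesssim\varepsilon$ with cost bounded as in \eqref{eq:complexity}.
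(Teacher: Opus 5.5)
Your proposal is correct and follows the paper's proof essentially step by step: the same choice of $L$ from $n_L^{-\alpha}\lesssim\varepsilon$, the same Lagrange-multiplier allocation $N_\ell\propto n_\ell^{-(\beta+\gamma)/(\mu+1)}$ giving cost $\varepsilon^{-1/\mu}S_L^{1+1/\mu}$ with the three-case analysis of the geometric sum $S_L$, and the same $N_{\min}$ floor whose extra cost $\lesssim\varepsilon^{-\gamma/\alpha}$ is shown to be dominated using $\beta\leq\alpha$. Two points the paper leaves implicit are made explicit in your version: you account for the ceiling through the factor $N_{\min}+1$, and you note that $\varepsilon<\mathrm{e}^{-1}$ is what guarantees $L+1\lesssim|\log\varepsilon|$.
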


\begin{proof}
The proof follows the arguments in \cite[Theorem~4.2]{teckentrupMultilevelStochasticCollocation2015}, applied to the multilevel error bound \eqref{eq:mse_bound} with convergence exponent $\mu$.
For completeness, we present the optimal number of levels and samples explicitly.

\emph{Number of levels.}
The finest level is chosen such that the bias satisfies $n_L^{-\alpha} \lesssim \varepsilon$, which gives
\begin{equation}\label{eq:optimal_L}
L = \left\lceil \frac{1}{\alpha} \log_M(\varepsilon^{-1}) \right\rceil.
\end{equation}

\emph{Sample sizes.}
The samples sizes on each level are chosen such that the computational cost \eqref{eq:ml_costs} is minimized, while the GP approximation error is of order $\varepsilon$.
Treating the sample sizes as continuous and using $\mathrm{Cost}(\Delta_\ell) \lesssim n_\ell^{\gamma}$ from Assumption \ref{ass:cost}, this is the minimization problem
\[
\min_{N_0, \ldots, N_L > 0} \sum_{\ell=0}^L N_\ell\, n_\ell^{\gamma}
\qquad \text{s. t.} \qquad
\sum_{\ell=0}^L N_\ell^{-\mu} n_\ell^{-\beta} = \varepsilon,
\]
with Lagrangian function
\[
\mathcal{L}(N_0, \ldots, N_L, \lambda) = \sum_{\ell=0}^L N_\ell \, n_\ell^{\gamma} + \lambda \left(\sum_{\ell=0}^L N_\ell^{-\mu} n_\ell^{-\beta} - \varepsilon\right).
\]
Taking the derivatives with respect to the sample sizes gives the conditions
\[
\frac{\partial \mathcal{L}}{\partial N_\ell}
= n_\ell^{\gamma} - \lambda\mu\, N_\ell^{-\mu-1}\, n_\ell^{-\beta} = 0,
\]
so that the optimal sample sizes decay geometrically across the levels, namely
\begin{equation}
\label{eq:optimal_allocation}
N_\ell = c\, n_\ell^{-(\beta+\gamma)/(\mu+1)}
\propto M^{-\ell (\beta + \gamma)/(\mu + 1)},
\qquad \ell \in \{0, \ldots, L\},
\end{equation}
with a constant $c > 0$ determined by the constraint.

\emph{Resulting costs.}
Defining $S_L \coloneqq \sum_{\ell=0}^{L} n_\ell^{(\gamma\mu-\beta)/(\mu+1)}$ and inserting \eqref{eq:optimal_allocation} into the constraint shows that $c^{-\mu} S_L = \varepsilon$, that is $c = \varepsilon^{-1/\mu} S_L^{1/\mu}$, such that the total cost is
\begin{equation}
\label{eq:cost_SL}
\mathcal{C}^{\mathrm{ML}}_\varepsilon = c\, S_L
= \varepsilon^{-1/\mu}\, S_L^{\,1 + 1/\mu}.
\end{equation}
It remains to determine the value of the geometric sum $S_L$.
Depending on the sign of $\gamma\mu - \beta$, its ratio $M^{(\gamma \mu - \beta) / (\mu + 1)}$ is smaller than, equal to, or larger than one, so that we have with the optimal number of levels \eqref{eq:optimal_L},
\[
S_L \lesssim
\begin{cases}
1, & \beta > \gamma\mu,\\[2pt]
L \lesssim |\log\varepsilon|, & \beta = \gamma\mu,\\[2pt]
n_L^{(\gamma\mu-\beta)/(\mu+1)}
\lesssim \varepsilon^{-\frac{\gamma\mu-\beta}{\alpha(\mu+1)}}, & \beta < \gamma\mu .
\end{cases}
\]
Inserting these into \eqref{eq:cost_SL} gives exactly the three cases of \eqref{eq:complexity}.

\emph{Minimal sample sizes.}
Finally, Theorem \ref{thm:gp_error} is only applicable for sufficiently small fill distances $h_{D_{N_\ell}} \leq h_0$, so that we replace \eqref{eq:optimal_allocation} by
\begin{equation}
\label{eq:optimal_allocation_floor}
N_\ell = \max\Bigl\{ N_{\min},\
\bigl\lceil c\, M^{-\ell (\beta + \gamma)/(\mu + 1)} \bigr\rceil \Bigr\},
\qquad \ell \in \{0, \ldots, L\} .
\end{equation}
This, however, adds at most $N_{\min} \sum_{\ell=0}^L \mathrm{Cost}(\Delta_\ell) \lesssim \varepsilon^{-\gamma/\alpha}$ to the cost, which is dominated by \eqref{eq:complexity} in all three cases.
If $\beta \geq \gamma\mu$, we have with $\beta \leq \alpha$ that $\gamma\mu \leq \alpha$ and thus $\gamma/\alpha \leq 1/\mu$, while for
$\beta < \gamma\mu$ the exponent in \eqref{eq:complexity} is not smaller than $\gamma/\alpha$, since again $\beta \leq \alpha$.
\end{proof}

As the proof shows, the three cases of Theorem \ref{thm:complexity} distinguish where the work is spent on in the hierarchy.
With the optimal allocation \eqref{eq:optimal_allocation} and $\mathrm{Cost}(\Delta_\ell) \lesssim M^{\gamma \ell}$, due to $n_\ell \propto M^\ell$ and Assumption \ref{ass:cost}, we see that the cost on level $\ell$ is given by
\[
N_\ell\,\mathrm{Cost}(\Delta_\ell) \lesssim M^{-\ell (\beta - \gamma \mu)/(\mu + 1)}.
\]
The sign of the exponent defines the cases, such that if $\beta > \gamma\mu$, the increments decay faster than the evaluations become expensive.
Therefore the work decreases with the level, and the total cost is dominated by the
coarsest level.
If $\beta < \gamma\mu$, the growth in cost is faster, such that the work accumulates on the finer levels.
In the boundary case $\beta = \gamma\mu$ every level contributes the same amount
of work and the logarithmic factor is essentially the number of levels \eqref{eq:optimal_L}.

\begin{remark}[Single-level complexity]
\label{rem:single_level}
For comparison, let $f_{n,N}^{\mathrm{GP}}$ denote the GP conditioned on $N$
evaluations of a single discretization $f_n$, which we refer to as the
\emph{single-level} construction.
By the telescopic structure \eqref{eq:telescop} and Assumption~\ref{ass:increment}, we have
\[
\|f_{n_L}\|_{W_2^{\tau_\Delta}(\Omega)} \leq \sum_{\ell=0}^{L} \|\Delta_\ell\|_{W_2^{\tau_\Delta}(\Omega)} \lesssim 1
\]
uniformly in $L$.
Hence $f_n \in W_2^{\tau_\Delta}(\Omega)$ with norm bounded uniformly in $n$, so that Theorem \ref{thm:gp_error}(i) and Corollary \ref{cor:mle_scaled_var} apply to $f_n$ with the same exponent $\mu$ and, together with Assumption~\ref{ass:solver}, give
\begin{equation}
\label{eq:sl_error}
\E \Bigl[\bigl\|f - f_{n, N}^{\mathrm{GP}}
\bigr\|_{L^2(\Omega)}^2\Bigr]^{1/2} \;\lesssim\; n^{-\alpha} + N^{-\mu}.
\end{equation}
To reach an error tolerance $\varepsilon$, both terms in \eqref{eq:sl_error} are
required to be of order $\varepsilon$, which determines, up to integer rounding,
\[
n^{-\alpha} = \varepsilon
\quad\Longleftrightarrow\quad
n = \varepsilon^{-1/\alpha},
\quad
\text{and}
\quad
N^{-\mu} = \varepsilon
\quad\Longleftrightarrow\quad
N = \varepsilon^{-1/\mu}.
\]
Together with $\mathrm{Cost}(f_n) \lesssim n^\gamma$ by Assumption \ref{ass:cost}, the single-level complexity is thus given by 
\begin{equation}
\label{eq:sl_complexity}
\mathcal{C}^{\mathrm{SL}}_\varepsilon
\;=\; N \cdot \mathrm{Cost}(f_n)
\;\lesssim\; \varepsilon^{-\frac{1}{\mu} - \frac{\gamma}{\alpha}}.
\end{equation}
Note that \eqref{eq:sl_complexity} is an upper bound, which is only sharp if $f_n$ is not smoother than the increments, i.e., if $\tau_f = \tau_\Delta$ for the regularity $\tau_f$ of $f_n$.
\end{remark}

Roughly speaking, while the single-level complexity is the product of the costs for solving the underlying PDE (rate $\gamma / \alpha$) and the GP regression (rate $1/\mu$), the multilevel complexity reduces this to the maximum of those two, up to a logarithmic factor in the case $\beta = \gamma\mu$.
In the coarse-dominated regime $\beta > \gamma\mu$, the multilevel costs reduce to the exponent $1/\mu$, which is equivalent to performing GP regression with evaluations of the \emph{exact} response surface $f$.
On the other hand, in the fine-dominated $\beta < \gamma\mu$, the multilevel GP estimator is as expensive as a \emph{single} PDE solve at a fine discretization $n_L$, i.e., the regression comes for free.

\section{Application to Bayesian Inverse Problems}
\label{sec:bip}

We apply the multilevel GP framework developed in Section~\ref{sec:mlgp} to the
approximation of the posterior distribution arising in an inverse problem.
The strategy is to replace the potentially expensive forward model
$f : \Omega \subset \R^d \to \R$ with the multilevel GP in order to reduce the cost
of approximating the solution of the inverse problem.
If $f$ is vector-valued with values in $\R^{d_y}$, $d_y > 1$, each component could be approximated by an independent multilevel GP, so that we present the scalar case for simplicity.

Consider the Bayesian inverse problem of inferring $\omega \in \Omega$ from a noisy observation
\[
y = f(\omega) + \eta,
\]
where $\eta \sim \mathcal{N}(0, \sigma_{\mathrm{obs}}^2)$ is Gaussian observational
noise with known variance $\sigma_{\mathrm{obs}}^2 > 0$.
Given a prior probability measure $\mu_0$ on $\Omega$ with Lebesgue density $\pi_0 \in L^\infty(\Omega)$, Bayes' theorem yields the posterior measure $\mu^y$ with Lebesgue density
\begin{equation}
\label{eq:true_posterior}
\pi^y(\omega)
\propto
\exp\!\left(-\frac{\bigl(y - f(\omega)\bigr)^2}{2\sigma_{\mathrm{obs}}^2}\right) \pi_0(\omega),
\end{equation}
where the proportionality constant is determined by normalizing to a probability
density.

\subsection{GP based approximation}
\label{sec:gp_based_post}

A standard approach is to replace $f$ in \eqref{eq:true_posterior} by a GP surrogate, yielding an approximate posterior that can be evaluated cheaply.
Most commonly, the deterministic posterior mean $m^f_N$ is used as a plug-in for $f$.
However, this disregards the uncertainty estimate that the GP provides, treating the surrogate as exact and thus ignoring the error it introduces into the approximate posterior \cite{baiGaussianProcessesBayesian2024}.
This uncertainty quantification is especially important in the multilevel setting, where only a few samples are used for the expensive higher-level increments.
Limited amount of data results in substantial posterior uncertainty for the corresponding levelwise GPs.
An alternative is to use a single sample, or to consider the full distribution of the GP.

Following \cite{stuartPosteriorConsistencyGaussian2017}, we adopt the latter
approach and use the marginalization of the multilevel GP posterior with mean $\bar{m}^f$ and variance $\bar{k}$.
We integrate the likelihood in \eqref{eq:true_posterior} over
the GP posterior distribution, which yields the deterministic approximate posterior measure $\mu^y_L$ with density
\begin{equation}
\label{eq:marginal_posterior}
\pi^y_L(\omega)
\propto
\mathbb{E}_{f_L^{\mathrm{GP}}}\!\left[
\exp\!\left(-\frac{\bigl(y - f_L^{\mathrm{GP}}(\omega)\bigr)^2}
{2\sigma_{\mathrm{obs}}^2}\right)
\right] \pi_0(\omega).
\end{equation}
Since $f_L^{\mathrm{GP}}(\omega) \sim \mathcal{N}\!\bigl(\bar{m}^f(\omega),\,
\bar{k}(\omega,\omega)\bigr)$, the expectation in \eqref{eq:marginal_posterior} is a Gaussian convolution and can be evaluated in closed form.
Indeed, carrying out the marginalization over the Gaussian posterior yields
\begin{equation}
\label{eq:marginal_posterior_closed}
\pi^y_L(\omega)
\propto
\frac{1}{\sqrt{\sigma_{\mathrm{obs}}^2 + \bar{k}(\omega,\omega)}}\,
\exp\!\left(
-\frac{\bigl(y - \bar{m}^f(\omega)\bigr)^2}
{2\bigl(\sigma_{\mathrm{obs}}^2 + \bar{k}(\omega,\omega)\bigr)}
\right) \pi_0(\omega),
\end{equation}
where the predictive variance $\bar{k}(\omega,\omega)$ thus enters by
inflating the observation variance $\sigma_{\mathrm{obs}}^2$ to
$\sigma_{\mathrm{obs}}^2 + \bar{k}(\omega,\omega)$.
The following result quantifies the discrepancy between the true posterior
\eqref{eq:true_posterior} and the marginalized GP-based posterior
\eqref{eq:marginal_posterior} in terms of the \emph{Hellinger distance} \cite{stuartInverseProblemsBayesian2010}.
For two probability measures $\mu, \mu'$ on $\Omega$ with Lebesgue densities $\pi, \pi'$, this is defined by
\[
d_{\mathrm{Hell}}(\mu, \mu')
\coloneqq \left(\frac{1}{2}\int_\Omega
\left(\sqrt{\pi(\omega)} - \sqrt{\pi'(\omega)}\right)^{\!2}
d\omega\right)^{\!1/2}.
\]
For a probability density $\pi$ on $\Omega$, we further define the weighted $L^2$ norm
\[
\|g\|_{L^2_{\pi}(\Omega)}^2 \coloneqq \int_\Omega |g(\omega)|^2 \,\pi(\omega) \,d\omega.
\]
Two choices of $\pi$ are of interest, the prior density $\pi_0$ and the posterior density $\pi^y$.

\begin{theorem}[{\cite[Corollary~4.10]{stuartPosteriorConsistencyGaussian2017}, \cite[Corollary~2]{helinIntroductionGaussianProcess2024}}]
\label{thm:hellinger}
Let $\mu^{y,N}$ be the marginalized posterior \eqref{eq:marginal_posterior} associated with the surrogate $\mathrm{GP}(m_N^f, k_N)$. Then it holds
\begin{align}
d_\mathrm{Hell}\!\left(\mu^y,\, \mu^{y,N}\right)
&\;\lesssim
\bigl\|f - m_N^f\bigr\|_{L^2_{\pi_0}(\Omega)}
+ \bigl\|k_N^{1/2}\bigr\|_{L^2_{\pi_0}(\Omega)},
\label{eq:hellinger_prior}\\[4pt]
d_\mathrm{Hell}\!\left(\mu^y,\, \mu^{y,N}\right)
&\;\lesssim
\bigl\|f - m_N^f\bigr\|_{L^2_{\pi^y}(\Omega)}
+ \bigl\|k_N^{1/2}\bigr\|_{L^2_{\pi^y}(\Omega)}.
\label{eq:hellinger_posterior}
\end{align}
\end{theorem}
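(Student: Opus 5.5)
The plan is to follow the standard perturbation argument for Hellinger distances between posteriors with the same prior, as in \cite{stuartPosteriorConsistencyGaussian2017}. Write both densities relative to $\pi_0$ as $\pi^y = \Phi\,\pi_0/Z$ and $\pi^{y,N} = \Phi_N\,\pi_0/Z_N$. Here
\[
\Phi(\omega)=\exp\!\bigl(-(y-f(\omega))^2/(2\sigma_{\mathrm{obs}}^2)\bigr),
\qquad
\Phi_N(\omega)=\E\bigl[\exp\!\bigl(-(y-f_N^{\mathrm{GP}}(\omega))^2/(2\sigma_{\mathrm{obs}}^2)\bigr)\bigr],
\]
where $f_N^{\mathrm{GP}}\sim\mathrm{GP}(m_N^f,k_N)$, and $Z, Z_N$ are the normalizing constants. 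Both likelihoods take values in $(0,1]$. Using the elementary splitting
\[
2\,d_{\mathrm{Hell}}^2 \le 2\int(\sqrt{\Phi}-\sqrt{\Phi_N})^2\pi_0/Z + 2\,|Z^{-1/2}-Z_N^{-1/2}|^2 Z_N,
\]
together with $|Z-Z_N|\le \int|\Phi-\Phi_N|\pi_0 \le 2\bigl(\int(\sqrt\Phi-\sqrt{\Phi_N})^2\pi_0\bigr)^{1/2}$ (by Cauchy--Schwarz, since $\sqrt\Phi+\sqrt{\Phi_N}\le 2$), everything reduces to bounding $\|\sqrt\Phi-\sqrt{\Phi_N}\|_{L^2_{\pi_0}}$. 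The only other input needed is a lower bound on $Z$ and on $Z_N$. For $Z$ this is immediate since $f$ is fixed. For $Z_N$ I would use Jensen: $\Phi_N(\omega)\ge\exp\bigl(-\E(y-f_N^{\mathrm{GP}})^2/(2\sigma_{\mathrm{obs}}^2)\bigr)$, and then use that $m_N^f$ and $k_N$ are uniformly bounded for $h_{D_N}\le h_0$. This follows from the $L^\infty$ versions of the bounds in Theorem \ref{thm:gp_error}, because $f$ is continuous given $\tau_f>d/2$.

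The key pointwise step is to bound $|\sqrt{\Phi}-\sqrt{\Phi_N}|$. I would write $\sqrt{\Phi_N}-\sqrt{\Phi}\le(\Phi_N-\Phi)/\sqrt{\Phi}$ or use $|\sqrt a-\sqrt b|\le\sqrt{|a-b|}$. A cleaner route is to first bound $|\Phi-\Phi_N|(\omega)\le \E|\,e^{-g(f_N^{\mathrm{GP}})}-e^{-g(f)}|$, where $g(z)=(y-z)^2/(2\sigma_{\mathrm{obs}}^2)$. Since $z\mapsto e^{-g(z)}$ is globally Lipschitz on $\R$, this gives
\[
|\Phi-\Phi_N|(\omega)\lesssim \E|f(\omega)-f_N^{\mathrm{GP}}(\omega)|\le |f-m_N^f|(\omega)+k_N(\omega,\omega)^{1/2}.
\]
Because $\Phi$ is bounded below on $\Omega$ (boundedness of $f$), $|\sqrt\Phi-\sqrt{\Phi_N}|\lesssim|\Phi-\Phi_N|$. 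Squaring, integrating against $\pi_0$, and applying the triangle inequality in $L^2_{\pi_0}$ yields \eqref{eq:hellinger_prior}.

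For \eqref{eq:hellinger_posterior}, I would redo the integration against $\pi^y$ instead of $\pi_0$. Since $\Phi$ is bounded above and below on $\Omega$, $\pi^y$ and $\pi_0$ are equivalent with density ratio $\Phi/Z$ bounded above and below. Hence the weighted norms $\|\cdot\|_{L^2_{\pi_0}}$ and $\|\cdot\|_{L^2_{\pi^y}}$ are equivalent up to constants, and the second bound follows from the first. (The cited sources obtain it more intrinsically, by writing the integrand as $(1-\sqrt{\Phi_N/\Phi})^2\pi^y Z$, but this is not needed here.)

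I expect the main obstacle to be the uniform lower bounds: on $\Phi$, so that the square root is Lipschitz, and on $Z_N$ uniformly in $N$. These require $L^\infty$ control of $f$, $m_N^f$ and $k_N$. I would take these from the Sobolev embedding and the $L^\infty$ estimates cited in the proof of Theorem \ref{thm:gp_error}(ii). The hidden constants then depend on $y$, $\sigma_{\mathrm{obs}}$, $\|f\|_{L^\infty}$ and these uniform bounds, but not on $N$.
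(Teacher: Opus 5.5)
The paper gives no proof of Theorem~\ref{thm:hellinger}; it only cites \cite{stuartPosteriorConsistencyGaussian2017,helinIntroductionGaussianProcess2024}. Your argument is correct for the statement as written: for \eqref{eq:hellinger_prior} it is a valid version of the standard perturbation argument, and all steps go through for quasi-uniform designs with $h_{D_N}\le h_0$. Two points deserve comment.

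The first is that the obstacle you single out is an artifact of your route. In your splitting, the reverse triangle inequality gives $|Z^{-1/2}-Z_N^{-1/2}|^2Z_N=(\sqrt{Z_N}-\sqrt{Z})^2/Z\le Z^{-1}\|\sqrt{\Phi_N}-\sqrt{\Phi}\|_{L^2_{\pi_0}(\Omega)}^2$, so you need no lower bound on $Z_N$. Pointwise, $\sqrt{\Phi_N(\omega)}$ is the $L^2(\mathbb{P})$ norm of $\exp\bigl(-g(f_N^{\mathrm{GP}}(\omega))/2\bigr)$. The reverse triangle inequality in $L^2(\mathbb{P})$, together with the Lipschitz constant $(\sqrt{2\mathrm{e}}\,\sigma_{\mathrm{obs}})^{-1}$ of $z\mapsto\exp(-g(z)/2)$, then gives $|\sqrt{\Phi}-\sqrt{\Phi_N}|(\omega)\le(\sqrt{2\mathrm{e}}\,\sigma_{\mathrm{obs}})^{-1}\bigl((f-m_N^f)^2+k_N\bigr)^{1/2}(\omega)$ without dividing by $\sqrt{\Phi}$. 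Together,
\[
d_{\mathrm{Hell}}\bigl(\mu^y,\mu^{y,N}\bigr)\le\frac{1}{\sigma_{\mathrm{obs}}\sqrt{\mathrm{e}\,Z}}\Bigl(\bigl\|f-m_N^f\bigr\|_{L^2_{\pi_0}(\Omega)}+\bigl\|k_N^{1/2}\bigr\|_{L^2_{\pi_0}(\Omega)}\Bigr)
\]
for any design. This matters for two reasons. The uniform $L^\infty$ bound on $m_N^f$ you invoke is not part of Theorem~\ref{thm:gp_error}. In the misspecified regime it, like the MLE bound entering $k_N$, carries mesh-ratio or separation-radius penalties, so without quasi-uniformity (which Theorem~\ref{thm:hellinger} does not assume) your constants need not be uniform in $N$. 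Moreover, your factor $(\inf_\Omega\Phi)^{-1/2}=\exp\bigl(\sup_\Omega(y-f)^2/(4\sigma_{\mathrm{obs}}^2)\bigr)$ is enormous for small $\sigma_{\mathrm{obs}}$.

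The second point is that for \eqref{eq:hellinger_posterior} your route genuinely differs from \cite{helinIntroductionGaussianProcess2024}. Deducing it from \eqref{eq:hellinger_prior} via $\pi^y/\pi_0=\Phi/Z\ge\inf_\Omega\Phi$ is formally valid under the paper's convention that hidden constants may depend on $f$ and $y$. However, it only shows that with such constants the two bounds are equivalent, and it reintroduces the factor $(\inf_\Omega\Phi)^{-1/2}$. This loses exactly what Remark~\ref{rem:weighted_error} and the SN designs rely on: that surrogate errors where $\pi^y$ is tiny do not enter with an enormous weight. The intrinsic route you mention writes $2d_{\mathrm{Hell}}^2=\int_\Omega\bigl(1-\sqrt{Z\Phi_N/(Z_N\Phi)}\bigr)^2\pi^y\,d\omega$. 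It then controls $\Phi_N/\Phi=\E\bigl[\exp\bigl(\delta\,(y-f)/\sigma_{\mathrm{obs}}^2-\delta^2/(2\sigma_{\mathrm{obs}}^2)\bigr)\bigr]$, with $\delta=f_N^{\mathrm{GP}}-f$, directly. This is where uniform $L^\infty$ control of the surrogate error genuinely enters. Once that error is small, the resulting constant is of order $\|y-f\|_{L^\infty(\Omega)}/\sigma_{\mathrm{obs}}^2$ rather than $(\inf_\Omega\Phi)^{-1/2}$.
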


\begin{remark}[Prior- versus posterior-weighted error]
\label{rem:weighted_error}
In case $\mu_0$ is uniform on the bounded domain $\Omega$, the norm
$\|\cdot\|_{L^2_{\pi_0}(\Omega)}$ coincides with $\|\cdot\|_{L^2(\Omega)}$ up to a factor depending on $|\Omega|$, so that \eqref{eq:hellinger_prior} is an unweighted bound.
The second bound instead reads
\begin{equation}\label{eq:sqrt_post_weighting}
\bigl\|f - m_N^f\bigr\|_{L^2_{\pi^y}(\Omega)}
= \bigl\|(f - m_N^f)\,\sqrt{\pi^y}\bigr\|_{L^2(\Omega)},
\end{equation}
showing that surrogate errors in regions where $\pi^y$ is very small do not contribute.
The two bounds \eqref{eq:hellinger_prior} and \eqref{eq:hellinger_posterior} thus suggest different notions of a good data design.
The first bound is uniform over $\Omega$ and demands a surrogate that is accurate everywhere, while \eqref{eq:hellinger_posterior} suggests controlling $f - m_N^f$ and the posterior variance only on the support of the posterior.
This is the theoretical explanation for the posterior-oriented criteria of
Section \ref{sec:posterior_criteria}, and the resulting designs in the numerical experiments of Section \ref{sec:numerics} show this empirically.
\end{remark}

The surrogate posterior is hence accurate whenever the GP is both accurate in mean and small in variance, so all of the forward error analysis carries over directly to the inverse problem.
Applying bound \eqref{eq:hellinger_prior} of Theorem \ref{thm:hellinger} to the multilevel $\mathrm{GP}(\bar{m}^f, \bar{k})$ surrogate, together with Proposition \ref{prop:mlgp_error} thus directly controls the accuracy of the multilevel posterior.

\begin{corollary}[Multilevel posterior error]
\label{cor:hellinger_ml}
With quasi-uniform designs $D_{N_\ell}$ of size $N_\ell \geq N_{\min}$ on each level and marginal variances estimated by maximum likelihood, the marginalized
multilevel GP posterior \eqref{eq:marginal_posterior} satisfies
\begin{equation}\label{eq:hellinger_ml}
d_\mathrm{Hell}\!\left(\mu^y,\, \mu^y_L\right)
\;\lesssim\;
n_L^{-\alpha}
+ \sum_{\ell=0}^L N_\ell^{-\mu}\, n_\ell^{-\beta}.
\end{equation}
\end{corollary}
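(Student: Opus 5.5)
The plan is to apply the prior-weighted bound \eqref{eq:hellinger_prior} of Theorem \ref{thm:hellinger} with the generic surrogate $\mathrm{GP}(m_N^f,k_N)$ replaced by the multilevel posterior $\mathrm{GP}(\bar m^f,\bar k)$ from \eqref{eq:mlgp_posterior}. Then we reduce the weighted norms to unweighted $L^2(\Omega)$ norms and invoke Proposition \ref{prop:mlgp_error}.

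\emph{Step 1 (applicability of Theorem \ref{thm:hellinger}).} The result of \cite{stuartPosteriorConsistencyGaussian2017} only uses that the surrogate is a Gaussian process with some mean and covariance. It does not use that these arise from single-level conditioning. Since $\bar m^f=\sum_\ell m_\ell$ and $\bar k=\sum_\ell k_\ell$ define a valid GP, and the marginalized posterior \eqref{eq:marginal_posterior} is built from exactly this GP, the proof carries over verbatim. It yields
\[
d_{\mathrm{Hell}}(\mu^y,\mu^y_L)\lesssim \|f-\bar m^f\|_{L^2_{\pi_0}(\Omega)}+\|\bar k^{1/2}\|_{L^2_{\pi_0}(\Omega)} .
\]
I expect this to be the main point needing care. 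I would check that the implied constant in Theorem \ref{thm:hellinger} depends only on $y$, $\sigma_{\mathrm{obs}}$, $\pi_0$ and on a uniform bound on $\bar m^f$ and $\bar k$, not on the design. If such a uniform bound is needed, I would obtain it for $N_\ell\ge N_{\min}$ from the $L^\infty$ posterior variance bound used in the proof of Theorem \ref{thm:gp_error}(ii). For the mean, I would combine the bounded $W_2^{\tau_\Delta}$ norms of the increments with the Sobolev embedding ($\tau_\Delta>d/2$) and the stability of the interpolant.

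\emph{Step 2 (removing the weight).} Since $\pi_0\in L^\infty(\Omega)$, for any $g$ we have
\[
\|g\|_{L^2_{\pi_0}(\Omega)}\le \|\pi_0\|_{L^\infty(\Omega)}^{1/2}\|g\|_{L^2(\Omega)} .
\]
Applying this to $g=f-\bar m^f$ and to $g=\bar k^{1/2}$ bounds the right-hand side of Step 1 by a constant times $\|f-\bar m^f\|_{L^2(\Omega)}+\|\bar k^{1/2}\|_{L^2(\Omega)}$.

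\emph{Step 3 (rates).} Insert \eqref{eq:mlgp_mean_bound} and \eqref{eq:mlgp_variance_bound} from Proposition \ref{prop:mlgp_error}, which hold under exactly the hypotheses stated (quasi-uniform designs, $N_\ell\ge N_{\min}$, MLE scales). With $\mu=\min(\tau,\tau_\Delta)/d$, this gives
\[
n_L^{-\alpha}+2\sum_{\ell=0}^L N_\ell^{-\mu}n_\ell^{-\beta},
\]
which is \eqref{eq:hellinger_ml} after absorbing the constants.
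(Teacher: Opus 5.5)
Your proposal is correct and matches the paper's proof: apply \eqref{eq:hellinger_prior} to the multilevel surrogate $\mathrm{GP}(\bar m^f,\bar k)$, pass from $L^2_{\pi_0}(\Omega)$ to $L^2(\Omega)$ using $\pi_0\in L^\infty(\Omega)$, and insert \eqref{eq:mlgp_mean_bound} and \eqref{eq:mlgp_variance_bound}. Your extra check that the constant in Theorem~\ref{thm:hellinger} is uniform in the designs is something the paper leaves implicit; if you carry it out, note that a uniform bound on $\bar k=\sum_\ell\hat\sigma_\ell^2\, k_{N_\ell}$ requires the MLE bound of Theorem~\ref{thm:gp_error}(iii) in addition to the $L^\infty$ variance bound (as in Corollary~\ref{cor:mle_scaled_var}, but in $L^\infty$), because in the strongly misspecified regime the bound on $\hat\sigma_\ell$ can grow with $N_\ell$.
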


\begin{proof}
By \eqref{eq:hellinger_prior} applied to the multilevel surrogate, and since $\pi_0 \in L^\infty(\Omega)$,
\[
d_\mathrm{Hell}\!\left(\mu^y,\, \mu^y_L\right)
\lesssim
\|f - \bar{m}^f\|_{L^2(\Omega)} + \|\bar{k}^{1/2}\|_{L^2(\Omega)}.
\]
The mean estimate \eqref{eq:mlgp_mean_bound} and the variance estimate \eqref{eq:mlgp_variance_bound} of Proposition \ref{prop:mlgp_error} share the same rate, so bounding the two terms yields \eqref{eq:hellinger_ml}.
\end{proof}

Note that the constant in \eqref{eq:hellinger_ml} depends on the noise variance $\sigma_{\mathrm{obs}}$ and typically grows as $\sigma_{\mathrm{obs}}$ decreases, since a small observational noise makes the posterior more sensitive to errors of the
surrogate.
Since the bound \eqref{eq:hellinger_ml} is the same as the forward error estimate \eqref{eq:mse_bound}, the complexity analysis of Section \ref{sec:complexity} directly carries over to the inverse problem.
 
\begin{corollary}[Multilevel complexity for the inverse problem]
\label{cor:complexity_bip}
For every tolerance $0 < \varepsilon < \mathrm{e}^{-1}$ there exist a number of levels
$L$ and sample sizes $\{N_\ell\}_{\ell=0}^L$ such that
\[
d_\mathrm{Hell}\!\left(\mu^y,\, \mu^y_L\right) \lesssim \varepsilon,
\]
with total computational cost bounded by \eqref{eq:complexity}.
\end{corollary}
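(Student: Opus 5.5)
The plan is to reduce Corollary \ref{cor:complexity_bip} to Theorem \ref{thm:complexity} by way of Corollary \ref{cor:hellinger_ml}. The Hellinger bound \eqref{eq:hellinger_ml} has exactly the same right-hand side as the forward root mean squared error bound \eqref{eq:mse_bound}, namely $n_L^{-\alpha} + \sum_{\ell=0}^L N_\ell^{-\mu} n_\ell^{-\beta}$. The implied constant differs, since it now also depends on $\sigma_{\mathrm{obs}}$, $y$ and $\|\pi_0\|_{L^\infty(\Omega)}$, but it does not depend on $L$ or on $\{N_\ell\}$. So I will take the levels and sample sizes constructed in the proof of Theorem \ref{thm:complexity} and reuse them verbatim.

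Concretely, I will choose $L$ as in \eqref{eq:optimal_L}, so that $n_L^{-\alpha} \lesssim \varepsilon$. I will then choose $N_\ell$ as in \eqref{eq:optimal_allocation_floor}, with the constant $c = \varepsilon^{-1/\mu} S_L^{1/\mu}$ fixed by the constraint $\sum_\ell N_\ell^{-\mu} n_\ell^{-\beta} = \varepsilon$. With these choices:

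\begin{itemize}
\item Every $N_\ell \geq N_{\min}$.
\item The designs are quasi-uniform, so the hypotheses of Corollary \ref{cor:hellinger_ml} hold.
\item Increasing $N_\ell$ to its ceiling or to $N_{\min}$ only decreases $N_\ell^{-\mu}$, so the sum stays bounded by $\varepsilon$.
\end{itemize}

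Inserting these choices into \eqref{eq:hellinger_ml} gives $d_{\mathrm{Hell}}(\mu^y,\mu^y_L) \lesssim \varepsilon$.

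For the cost, \eqref{eq:ml_costs} depends only on $L$, $\{N_\ell\}$ and Assumption \ref{ass:cost}, not on which error functional is being controlled. The case analysis of $S_L$ in the proof of Theorem \ref{thm:complexity} therefore carries over unchanged and yields \eqref{eq:complexity}. This includes the remark that the extra $N_{\min}\sum_\ell \mathrm{Cost}(\Delta_\ell) \lesssim \varepsilon^{-\gamma/\alpha}$ coming from the floor and ceilings is dominated in all three regimes.

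The only point that needs care is uniformity of the constants. The constant in \eqref{eq:hellinger_ml} comes from Theorem \ref{thm:hellinger}, which is applied to the multilevel GP. That theorem requires a uniform bound on the surrogate variance, or at least that the normalizing constant of $\pi^y_L$ be bounded away from zero. I will argue that this holds uniformly in $L$ and $\varepsilon$:

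\begin{itemize}
\item $\bar k(\omega,\omega) \leq \sum_\ell \hat\sigma_\ell^2$.
\item By Theorem \ref{thm:gp_error}(iii) and Assumption \ref{ass:increment}, each $\hat\sigma_\ell$ is bounded by a constant times $n_\ell^{-\beta}$ times a power of $N_\ell$.
\end{itemize}

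In the strongly misspecified regime that power can be positive. If it is, I will instead rely on the structure of \cite[Corollary~4.10]{stuartPosteriorConsistencyGaussian2017}, whose constant depends only on $\sigma_{\mathrm{obs}}$, $y$ and $\pi_0$ for surrogates that are Gaussian pointwise. Either way, the constant is independent of $\varepsilon$, and the corollary follows.
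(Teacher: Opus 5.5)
Your proposal is correct and follows the paper's proof. Like the paper, you note that Corollary~\ref{cor:hellinger_ml} has the same right-hand side as \eqref{eq:mse_bound}, and then reuse the choice of $L$ and $\{N_\ell\}_{\ell=0}^L$, and hence the cost, from Theorem~\ref{thm:complexity}.

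Your extra discussion of uniform constants goes beyond the paper, which treats the constant in Theorem~\ref{thm:hellinger} as design-independent. If you want that step rigorous, Jensen's inequality gives
$Z_L \geq \exp\bigl(-\frac{1}{2\sigma_{\mathrm{obs}}^2}\int_\Omega [(y-\bar m^f(\omega))^2 + \bar k(\omega,\omega)]\,\pi_0(\omega)\,d\omega\bigr)$
for the normalizing constant $Z_L$ of $\pi^y_L$. The integral is bounded uniformly in $\varepsilon$ by Proposition~\ref{prop:mlgp_error}, so you need neither pointwise control of $\hat\sigma_\ell$ nor the appeal to the structure of the cited corollary in the strongly misspecified regime.
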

 
\begin{proof}
By Corollary \ref{cor:hellinger_ml}, the Hellinger distance obeys the same bound as the mean
squared error in Corollary \ref{cor:mlgp_mse}.
The choice of $L$ and $\{N_\ell\}_{\ell=0}^L$ are the same as in the proof of Theorem \ref{thm:complexity}, yielding $d_\mathrm{Hell}(\mu^y, \mu^y_L) \lesssim \varepsilon$ at the stated cost.
\end{proof}

\section{Adaptive multilevel construction}
\label{sec:adaptive}

The a priori, rate-driven construction of Section \ref{sec:mlgp} requires the problem parameters $\alpha$, $\beta$, $\gamma$ and the regularity $\tau_\Delta$ of the increments $\Delta_\ell \in W_2^{\tau_\Delta}(\Omega)$ in order to compute the optimal number of levels and sample sizes from Theorem \ref{thm:complexity}.
In practice, these quantities are rarely known.
In this section, we present an adaptive version of the multilevel GP, which aims to achieve similar accuracy without access to these rates.
This construction is a heuristic not covered by our theory, and its performance is investigated numerically in Section \ref{sec:numerics}.

One first approach could be to estimate the rates from data and then determine the sample sizes \eqref{eq:optimal_allocation} accordingly \cite{Giles2008, Cliffe2011, Collier2015}.
An alternative is to bypass the rates entirely and instead refine the construction incrementally, guided by an error-to-cost ratio updated as samples are added.
Such adaptive refinement strategies are well established in related settings, for instance in the adaptive construction of sparse-grid index sets \cite{Gerstner2003} and in adaptive multilevel quasi-Monte Carlo methods \cite{Giles2009}.

In the following, we adopt the latter approach, by building the multilevel GP incrementally, and placing samples one at a time, while adding levels as needed.
The algorithm refines a current design $D_\ell \subset \Omega$ and the associated level-$\ell$ GP posterior on each level $\ell \in \{0,\ldots,L\}$.
Each refinement step selects a single
\[
(\omega, \ell) \in \Omega \times \{0, \ldots, L\}
\]
pair, evaluates the corresponding increment function $(\omega, \ell) \mapsto \Delta_\ell(\omega)$,
and updates the level-$\ell$ posterior efficiently using a rank-one update.
The choice of $(\ell, \omega)$ is governed by a \emph{sequential design} criterion that trades off the information gained against the cost of the evaluation.
Formally, such a criterion assigns to each level $\ell$ and candidate point $\omega \in \Omega$ an acquisition value $a_\ell(\omega)$, and the next sample is chosen as its maximizer,
\begin{equation}
\label{eq:acquisition}
(\omega^\star, \ell^\star) =
\argmax_{\substack{\omega \in \Omega \\ \ell \in \{0,\ldots,L\}}}\, a_\ell(\omega).
\end{equation}
A natural way to balance information against cost is the ratio
\begin{equation}
\label{eq:gain_cost}
a_\ell(\omega) = \frac{g_\ell(\omega)}{C_\ell},
\end{equation}
with $g_\ell(\omega)$ quantifying the information gain by evaluating the increment function $(\omega, \ell) \mapsto \Delta_\ell(\omega)$, and $C_\ell$ the evaluation cost on level $\ell$.
Different choices of the gain $g_\ell$ yield different design sets.
We distinguish between \emph{forward-oriented} criteria, which depend solely on the surrogate of the forward model $f$ and improve its accuracy uniformly over $\Omega$, and \emph{posterior-oriented} criteria, which additionally use the observation $y$ and target the accuracy of the resulting posterior. Two examples are presented in Sections \ref{sec:forward_criteria} and \ref{sec:posterior_criteria}, respectively.
The ratio \eqref{eq:gain_cost} can also be seen as a \emph{profit} in knapsack-type constructions, where the total gain is maximized subject to a budget on the total work.
This is the standard greedy strategy used in such problems and can be found, for instance, in the construction of sparse-grid index sets \cite{Gerstner2003} or multi-index stochastic collocation \cite{Haji-Ali2016}.

The algorithm starts with level $\ell = 0$ with a small quasi-uniform design $\mathring{D} \subset \Omega$ of size $\mathring{N}$.
At each step, the acquisition criterion selects the level and point offering the largest information gain per unit cost, across all currently active levels $\{0, \ldots, L\}$.
A new level $L + 1$ is added once the criterion has selected the current finest level $\ell^\star = L$.
The per-level costs $C_\ell$ entering the criterion are not known a priori and are estimated online from the measured evaluation times.
The procedure is summarized in Algorithm \ref{alg:adaptive_mlgp}.

\begin{algorithm}
\caption{Adaptive multilevel Gaussian process}
\label{alg:adaptive_mlgp}
\begin{algorithmic}[1]
\REQUIRE response functions $\left\{f_\ell\right\}_{\ell \in \N}$, number of refinement steps $T$; initial design size $\mathring{N}$; acquisition criterion $a_\ell$
\STATE $L \gets 0$; draw a quasi-uniform initial design $\mathring{D}$ of size $\mathring{N}$ and set $D_0 \gets \mathring{D}$
\STATE Evaluate $\Delta_0(D_0)$, fit the level-$0$ GP, and update the cost estimate $C_0$
\FOR{$t = 1, \ldots, T$}
  \STATE Solve for  $(\omega^\star, \ell^\star)$ by the acquisition criterion \eqref{eq:acquisition}
  \STATE Evaluate the increment $\Delta_{\ell^\star}(\omega^\star)$ and set $D_{\ell^\star} \gets D_{\ell^\star} \cup \{\omega^\star\}$
  \STATE Update the level-$\ell^\star$ GP posterior and the cost estimate $C_{\ell^\star}$
  \IF{$\ell^\star = L$}
    \STATE $L \gets L + 1$; draw a quasi-uniform initial design $\mathring{D}$ of size $\mathring{N}$ and set $D_L \gets \mathring{D}$
    \STATE Evaluate $\Delta_L(D_L)$, fit the level-$L$ GP, and update the cost estimate $C_L$
  \ENDIF
\ENDFOR
\STATE \textbf{return} Multilevel $\mathrm{GP}\!\left(\bar{m}^f,\, \bar{k}\right)$ with $\bar{m}^f = \sum_{\ell=0}^L m_\ell$, $\;\bar{k} = \sum_{\ell=0}^L k_\ell$
\end{algorithmic}
\end{algorithm}

\subsection{Forward-oriented criteria}
\label{sec:forward_criteria}

A natural criterion in the GP setting is to place the next point where the posterior is most uncertain.
The \emph{active learning MacKay} (ALM) criterion \cite{MacKay1992} selects the point of maximal posterior variance \eqref{eq:posterior_cov},
\[
\omega_{N+1} = \argmax_{\omega \in \Omega}\, k_{N}(\omega,\omega),
\]
which corresponds to the gain function $g(\omega) = k_N(\omega,\omega)$.
Note that if we were to use a single-level GP, the marginal variance scaling of the kernel does not affect the optimization as a linear factor.
In the multilevel setting, however, $g_\ell(\omega) = k_\ell(\omega, \omega)$ with the MLE-scaled posterior variance $k_\ell = \hat\sigma_\ell^2\, k_{N_\ell}$, and these scales differ between levels, affecting the optimization over the levels.
Inserting the gain function $g_\ell$ into the gain-to-cost ratio \eqref{eq:gain_cost} yields the multilevel ALM acquisition criterion
\begin{equation}\label{eq:alm_cost}
(\omega^\star, \ell^\star) =
\argmax_{\substack{\omega \in \Omega \\ \ell \in \{0,\ldots,L\}}}\,
\frac{k_\ell(\omega,\omega)}{C_\ell}.
\end{equation}
Although the error analysis in Section \ref{sec:error_analysis} is formulated in terms of the integrated variance, the ALM criterion targets its supremum, which controls the integrated variance in the proof of Theorem \ref{thm:gp_error}(ii).
An alternative criterion that aims directly at reducing the integrated variance is the \emph{active learning Cohn} (ALC) criterion \cite{Cohn1996}, at the additional cost of an integral over $\Omega$ per candidate.
Across the levels, however, \eqref{eq:alm_cost} ranks candidates by variance per unit cost, whereas \eqref{eq:mlgp_variance_bound} is linear in $\hat\sigma_\ell$, so that ALM shifts samples towards the coarse levels compared with an allocation minimizing \eqref{eq:mlgp_variance_bound}.
Nevertheless, the numerical experiment in Section \ref{sec:numerics} shows that the resulting complexity is consistent with that of the non-adaptive method.
The ALM strategy is known to produce quasi-uniform designs in the limit \cite{Wenzel2021}, which is also verified in the numerical experiment.
The maximization over $\omega$ is carried out approximately over a fixed quasi-Monte Carlo candidate set.

\subsection{Posterior-oriented criteria}
\label{sec:posterior_criteria}

Generally speaking, forward-oriented criteria reduce the uncertainty of the GP surrogate uniformly over $\Omega$.
However, if the GP is constructed with the goal of approximating the solution of a Bayesian inverse problem, only its accuracy in regions with high posterior mass affects the posterior distribution, which is confirmed by \eqref{eq:hellinger_posterior} in Theorem \ref{thm:hellinger}.
Hence posterior-oriented criteria target the posterior directly \cite{sinsbeckSequentialDesignComputer2017, lartaudSolvingBayesianInverse2026}.

In \cite{sinsbeckSequentialDesignComputer2017}, for instance, the acquisition
criterion is the so-called Bayes risk after a hypothetical evaluation, which we refer to as the \emph{Sinsbeck-Nowak} (SN) criterion.
For a candidate point $\omega \in \Omega$, denote by
\[
\hat Y \sim \mathcal{N}\left(m^f_N(\omega), \,k_N(\omega, \omega)\right)
\]
the $\mathrm{GP}(m^f_N, k_N)$-predicted evaluation at $\omega$, and let $f^{\mathrm{GP}}_+$ denote the GP, that is additionally conditioned on $f(\omega) = \hat Y$.
The gain is then the expected reduction of the Bayes risk,
\begin{equation}
\label{eq:sn_criterion}
g(\omega)
:= \mathbb{E}_{\mu_0}\, \operatorname{Var}_{f^{\mathrm{GP}}}\!\left[ L(\,\cdot \mid f^{\mathrm{GP}}) \right]
- \mathbb{E}_{\mu_0}\, \mathbb{E}_{\hat Y}\, \operatorname{Var}_{f_+^{\mathrm{GP}}}\!\left[ L(\,\cdot \mid f_+^{\mathrm{GP}}) \right],
\end{equation}
with $L$ being the likelihood \eqref{eq:true_posterior}, where the model $f$ is replaced by the random GP surrogates $f^{\mathrm{GP}}$ and $f_+^{\mathrm{GP}}$, respectively.
Note that the first term in \eqref{eq:sn_criterion} is independent of $\omega$, so that it needs to be computed only once per refinement step.
For any estimator of the likelihood, the expected squared $L^2$ deviation from the true likelihood is minimized by the mean $\mathbb{E}_{f^{\mathrm{GP}}}\!\left[L(\,\cdot \mid f^{\mathrm{GP}})\right]$ of the surrogate likelihood, which is precisely the marginalized likelihood underlying the posterior approximation in \eqref{eq:marginal_posterior}.
The Bayes risk attained for this likelihood estimator is the variance appearing in \eqref{eq:sn_criterion}.
Maximizing $g$ therefore chooses the design point such that, when additionally conditioned on it, the resulting marginalized posterior is closest to the true likelihood in this $L^2$ sense.
On the other hand, the plug-in approach, which inserts only the posterior mean $m^f_N$ into the likelihood, ignores the GP uncertainty and is overconfident where the surrogate is inaccurate \cite{sinsbeckSequentialDesignComputer2017}.
The variance as well as the expectation over $\hat Y$ admit closed-form Gaussian expressions \cite{sinsbeckSequentialDesignComputer2017}, and the outer expectation $\mathbb{E}_{\mu_0}$ can be approximated using a fixed quasi-Monte Carlo grid.

For the multilevel setting, candidates consist of pairs $(\ell, \omega)$, and its evaluation conditions only the level-$\ell$ GP on the hypothetical increment value
\[
\hat Y_\ell \sim \mathcal{N}\!\left(m_\ell(\omega),\, k_\ell(\omega,\omega)\right).
\]
Denoting by $f_{L,+}^{\mathrm{GP}}$ the multilevel GP with the level-$\ell$ GP being additionally conditioned on $\Delta_\ell(\omega) = \hat Y_\ell$, the gain becomes
\begin{equation}
\label{eq:sn_gain_ml}
g_\ell(\omega)
:= \mathbb{E}_{\mu_0}\, \operatorname{Var}_{f_L^{\mathrm{GP}}}\!\left[ L(\,\cdot \mid f_L^{\mathrm{GP}}) \right]
- \mathbb{E}_{\mu_0}\, \mathbb{E}_{\hat Y_\ell}\, \operatorname{Var}_{f_{L,+}^{\mathrm{GP}}}\!\left[ L(\,\cdot \mid f_{L,+}^{\mathrm{GP}}) \right].
\end{equation}
Inserting $g_\ell$ into the gain-to-cost ratio \eqref{eq:gain_cost} normalizes by the level cost $C_\ell$, so that design point and level are again chosen simultaneously.
In contrast to the forward ALM criterion, the resulting designs concentrate in regions of high posterior mass and are therefore not quasi-uniform.


\section{Numerical experiment}
\label{sec:numerics}

In this section the derived theoretical rates are illustrated on a Bayesian inverse problem.
We consider the one-dimensional elliptic boundary-value problem
\begin{equation}
\label{eq:elliptic}
-\bigl(a(x,\omega)\, u'(x,\omega)\bigr)' = 1,
\quad x \in U = [0,1],
\qquad
u(0,\omega) = u(1,\omega) = 0,
\end{equation}
with diffusion coefficient $a(x,\omega) = 1 + x + \|\omega\|_2^3$ and uncertain parameter $\omega \in \Omega = [0,1]^2$.
Writing $c_\omega := \|\omega\|_2^3$, the exact solution is
\[
u(x,\omega)
= \log\left(\frac{2+c_\omega}{1+c_\omega}\right)^{-1} \log\left(\frac{1+x+c_\omega}{1+c_\omega}\right) - x.
\]
As the quantity of interest we take the integral of the solution,
\[
f(\omega) = \int_U u(x,\omega)\, dx,
\]
which defines the response surface $f : \Omega \to \R$ to be approximated, and is also available in closed form, namely
\begin{equation}
\label{eq:qoi}
f(\omega) = \frac{3}{2} + c_\omega - \log \left(\frac{2+c_\omega}{1+c_\omega}\right)^{-1}.
\end{equation}
Although this closed form exists, we treat $f$ as accessible only through a numerical solver and use \eqref{eq:qoi} solely to compute reference quantities and corresponding errors.

For the level hierarchy, we define the level-$\ell$ approximation $f_\ell : \Omega \to \R$ by discretizing \eqref{eq:elliptic} on uniform grids with $n_\ell = C_n\, M^\ell$ grid points, using the refinement factor $M = 2$ and $C_n = 10$.
The solver discretizes \eqref{eq:elliptic} using a first-order upwind difference, while the quantity of interest is approximated with the trapezoidal rule.
Due to the first-order upwind discretization we expect Assumptions \ref{ass:solver} and \ref{ass:cost} to hold with $\alpha = 1$ and $\gamma = 1$, which we verify numerically in Section \ref{sec:num_rates}.

The goal then is to infer the true parameter $\omega^\star = (0.3, 0.7) \in \Omega$ from a noisy observation of the quantity of interest,
\[
y = f(\omega^\star) + \eta,
\qquad \eta \sim \mathcal{N}(0, \sigma_{\mathrm{obs}}^2),
\]
with noise level $\sigma_{\mathrm{obs}} = 10^{-3}$.
We use a uniform prior $\mu_0$ on $\Omega$, and the true posterior $\mu^y$ and the GP-based approximations are given by \eqref{eq:true_posterior} and \eqref{eq:marginal_posterior_closed}, respectively.
The error of a surrogate posterior is measured by the Hellinger distance to the reference posterior obtained from the exact response \eqref{eq:qoi}, where the densities are evaluated on an equidistant grid of size $70 \times 70$ over $\Omega$.

We use Matérn kernels \eqref{eq:matern_kernel} with smoothness $\nu = 7/2$ and length scale $\lambda = 1$ for all GPs, such that $\tau = \nu + d/2 = 9/2$, and the marginal variances are estimated levelwise by the MLE \eqref{eq:mle}.
Since the increments depend on $\omega$ only through $c_\omega = \|\omega\|_2^3$, whose $k$-th order derivatives behave like $\|\omega\|_2^{3-k}$ and are square integrable near the origin only for $k < 3 + d/2 = 4$, we have $\Delta_\ell \in W_2^{\tau_\Delta}(\Omega)$ for every $\tau_\Delta < 4$.
The experiment is therefore in the misspecified regime $\tau_\Delta < \tau$, and the effective convergence rate $\mu = \tau_\Delta/d$ can be arbitrarily close to $2$, so that we use $\mu = 2$ in the predicted rates below.

All non-adaptive designs consist of scrambled Sobol points, which are not
quasi-uniform, since their separation radius is not controlled, but attain a fill
distance of order $N^{-1/d}$ \cite{Niederreiter1988}.
In the present misspecified regime, the bounds of Theorem \ref{thm:gp_error} then contain the mesh ratio and separation radius as penalty, such that, strictly speaking, Corollary \ref{cor:mle_scaled_var} and the results of Sections \ref{sec:mlgp} and \ref{sec:bip} do not apply.
However, the observed rates in this experiment still agree with those for quasi-uniform designs, which suggests that the penalties, in particular the one in the MLE bound of Theorem \ref{thm:gp_error} (iii), are pessimistic for low-discrepancy designs.
All kernel matrices are factorized via Cholesky decomposition without a nugget, which
was feasible for the sample sizes considered.

\subsection{Verification of the rate assumptions}
\label{sec:num_rates}

The discretization error of Assumption \ref{ass:solver} is estimated by Monte Carlo using $200$ uniformly drawn $\omega^{(i)} \in \Omega$ and the exact response \eqref{eq:qoi} as reference.
The order of convergence fits well with the order $\alpha = 1$.
We also assume $\beta = \alpha = 1$ for the weaker rate, which is also consistent with the following numerical results.
The cost rate $\gamma$ of Assumption \ref{ass:cost} is estimated from the measured time per solver evaluation.
Due to the implementation, the fit is only visible from the finer levels onwards (roughly $n_\ell = 2000$).
The order is consistent with the linear complexity $\gamma = 1$ of the used solver.
Both fits are shown in Figure \ref{fig:rate_verification}.

\begin{figure}[h]
\centering
\includegraphics[width=\linewidth]{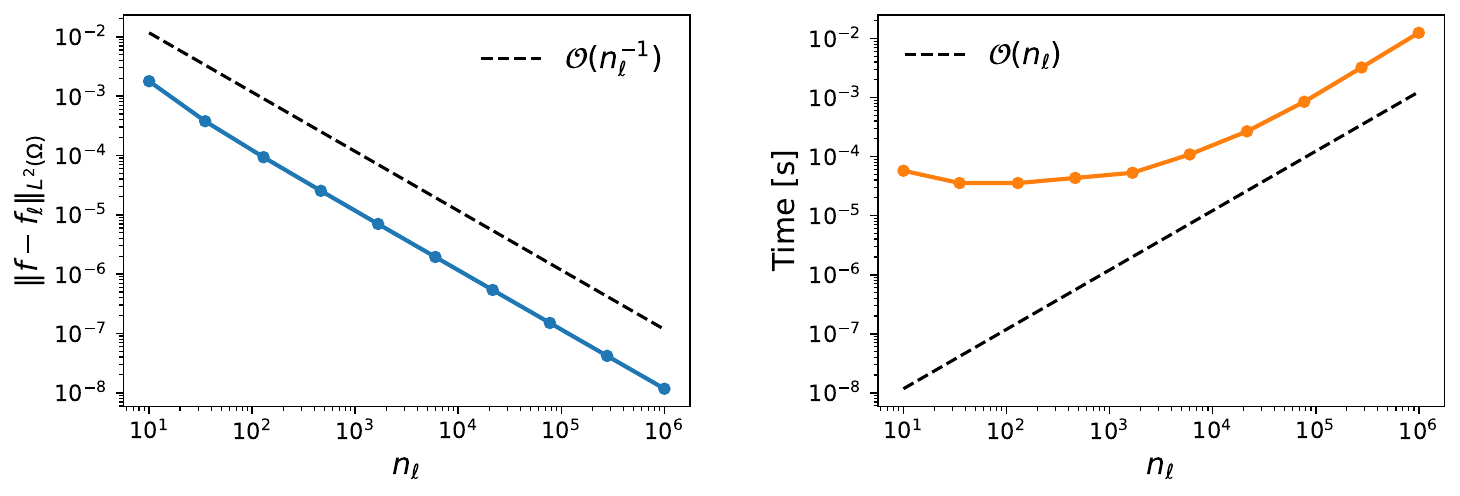}
\caption{Verification of the rate assumptions.
Left: Discretization error $\|f - f_\ell\|_{L^2(\Omega)}$ against the discretization parameter $n_\ell$, with the order $n_\ell^{-1}$ confirming $\alpha = 1$.
Right: Measured time per evaluation, with the order $n_\ell$ consistent with $\gamma = 1$.}
\label{fig:rate_verification}
\end{figure}

\subsection{Multilevel construction}
\label{sec:num_rd}

The non-adaptive, rate-driven multilevel GP fixes the number of levels $L$ and per-level sample sizes $N_\ell$ according to the complexity analysis of Theorem \ref{thm:complexity}.
We run this construction for eight logarithmically spaced target tolerances $\varepsilon \in [10^{-5}, 10^{-2}]$.
Figure \ref{fig:ml_tolerance} (left) shows the observed Hellinger distance $d_{\mathrm{Hell}}(\mu^y, \mu^y_L)$ against the prescribed tolerance $\varepsilon$.
Based on the investigated error decomposition
$$
d_\mathrm{Hell}\!\left(\mu^y,\, \mu^y_L\right)
\lesssim
\|f - f_L\|_{L^2(\Omega)} + \|f_L - \bar{m}^f\|_{L^2(\Omega)} + \|\bar{k}^{1/2}\|_{L^2(\Omega)},
$$
the observed bias, GP approximation error, and posterior standard deviation are shown as well.
All observed errors decay proportionally to $\varepsilon$ over the entire range, in line with the theory for quasi-uniform designs, so that the a priori sample allocation attains the prescribed accuracy up to a moderate constant.

\begin{figure}[h]
\centering
\includegraphics[width=\linewidth]{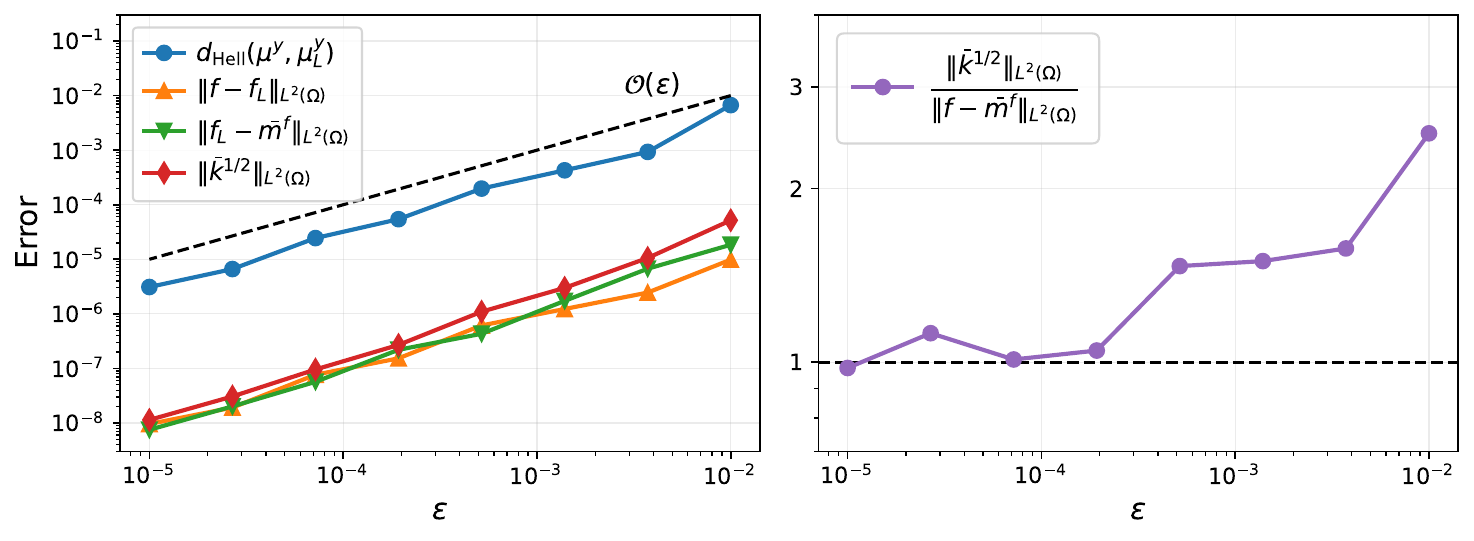}
\caption{Rate-driven multilevel GP against the prescribed target tolerance $\varepsilon$.
Left: Observed Hellinger distance $d_{\mathrm{Hell}}(\mu^y, \mu^y_L)$ (blue), together
with the bias (orange), GP approximation error (green), and integrated posterior
standard deviation (red). The dashed line indicates the order $\varepsilon$.
Right: Ratio of the integrated posterior standard deviation to the error of the
posterior mean. The dashed line indicates the value one.}
\label{fig:ml_tolerance}
\end{figure}

Since the integrated posterior standard deviation $\|\bar k^{1/2}\|_{L^2(\Omega)}$ is available without evaluating $f$, it is natural to ask whether it can be used as an error indicator.
The right plot of Figure \ref{fig:ml_tolerance} shows its ratio with the error $\|f - \bar m^f\|_{L^2(\Omega)}$ of the posterior mean.
Over the considered range of tolerances, the ratio approaches one as $\varepsilon$ decreases, and in this example the posterior standard deviation overestimates the error by at most a factor of $2.5$.
Since both quantities tend to zero and our analysis provides only upper bounds with the same rate for them, it is difficult to predict how their ratio behaves for smaller $\varepsilon$.

\subsection{Complexity comparison}
\label{sec:num_complexity}

We now investigate the predicted computational costs of the multilevel method.
For this, the computational complexity of four surrogate constructions is compared.
The \emph{single-level} GPs (SL) are trained on $N$ Sobol points of one single discretization $f_n$.
The accuracy and cost are thus determined by two parameters, the discretization $n$ and the sample size $N$, which need to be balanced against each other.
To compare the SL method at its most efficient, we evaluate the surrogate on a dense grid of $(n, N)$ combinations and keep only the Pareto-optimal pairs, which means those for which no other combination results in both a smaller error and a smaller computational work.
This yields the balanced pairs
\[
n \in \{8.5 \cdot 10^2,\; 2.7 \cdot 10^3,\; 8.4 \cdot 10^3,\; 2.7 \cdot 10^4,\; 8.4 \cdot 10^4,\; 2.7 \cdot 10^5,\; 8.4 \cdot 10^5\},
\]
with corresponding sample sizes
\[
N \in \{64,\; 96,\; 192,\; 256,\; 512,\; 768,\; 1024\}.
\]
The \emph{rate-driven} multilevel GP (ML) uses the construction of Section \ref{sec:num_rd}.
The two \emph{adaptive} multilevel constructions of Section \ref{sec:adaptive} build the hierarchy incrementally by Algorithm \ref{alg:adaptive_mlgp}.
The forward-oriented variant (ALM) uses the cost-weighted maximum-variance criterion \eqref{eq:alm_cost}, run for $6000$ refinement steps, and the posterior-oriented variant (SN) uses the Bayes-risk criterion \eqref{eq:sn_gain_ml} of Section \ref{sec:posterior_criteria}, run for $1500$ steps.
The outer expectation $\E_{\mu_0}$ of the SN variant is approximated on a quasi-Monte Carlo grid of $512$ points.
For both adaptive variants, the initial Sobol design size for new levels is $\mathring{N} = 5$ and the acquisition criterion is maximized over a fixed Sobol grid of $4096$ points.

\begin{figure}[h]
\centering
\includegraphics[width=\linewidth]{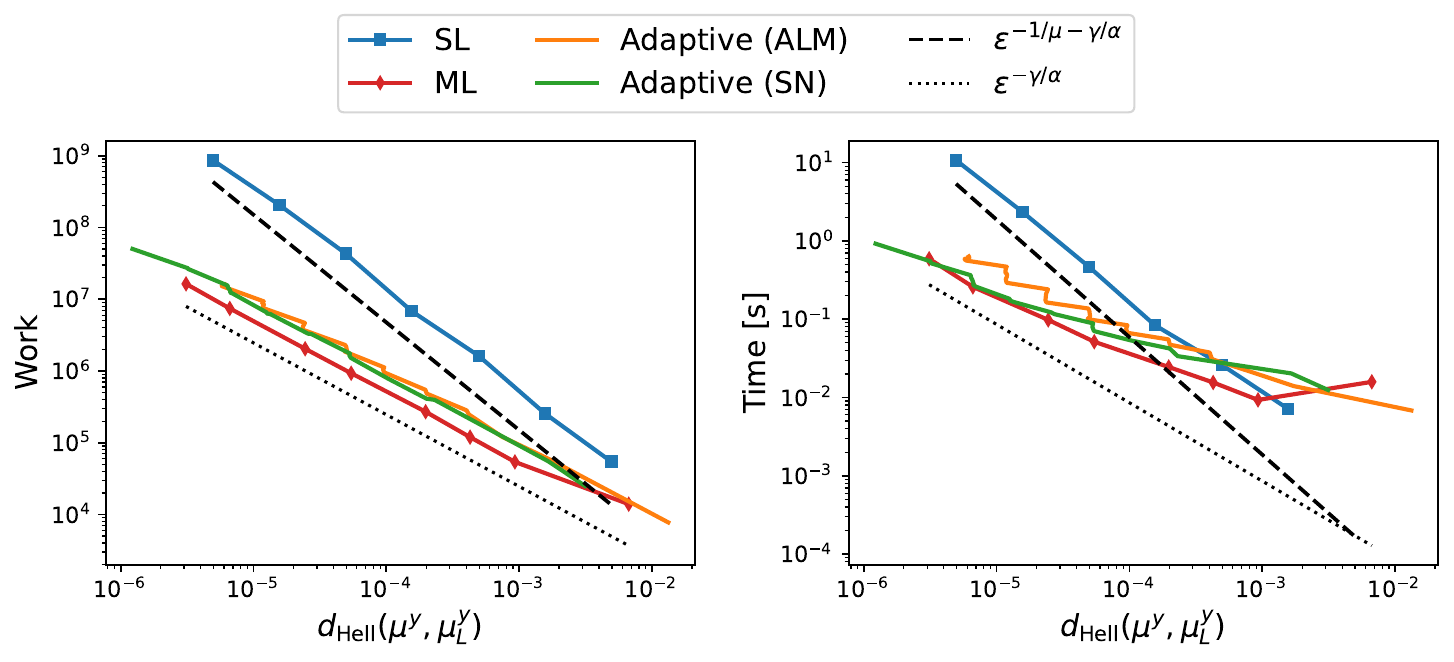}
\caption{Hellinger distance $d_{\mathrm{Hell}}(\mu^y, \mu^y_L)$ to the true posterior as a function of computational work (left) and time (right), for the single-level (SL), rate-driven multilevel (ML), and adaptive multilevel surrogates with forward-oriented (ALM) and posterior-oriented (SN) criteria.
Dashed and dotted lines indicate the theoretical single-level rate $\varepsilon^{-1/\mu - \gamma/\alpha}$ and multilevel rate $\varepsilon^{-\gamma/\alpha}$, respectively.}
\label{fig:complexity}
\end{figure}

Figure \ref{fig:complexity} depicts the Hellinger distance to the true posterior as a function of computational work (left) and time (right).
According to \eqref{eq:ml_costs}, the computational work includes solely the work of response evaluations across all levels, and not the costs for assembling or solving the linear system associated with the kernel matrix.
This is reasonable, since in practice, the costs of evaluating complex computer models dominate these additional costs.
The work is defined by \eqref{eq:ml_costs} with $\mathrm{Cost}(\Delta_\ell) = n_\ell^\gamma$ for the ML constructions, and by $N \cdot n^\gamma$ for the SL surrogates.
The SL complexity curve follows the predicted rate
\[
\mathcal{C}^{\mathrm{SL}}_\varepsilon \lesssim 
\varepsilon^{-1/\mu - \gamma/\alpha} = \varepsilon^{-3/2}.
\]
The rate-driven ML method has the improved rate of Theorem \ref{thm:complexity}.
Since $\beta = 1 < \gamma\mu = 2$, we are in the fine-dominated regime, for which the theorem predicts
\[
\mathcal{C}^{\mathrm{ML}}_\varepsilon \lesssim \varepsilon^{-\left(1 - \frac{\beta}{\alpha}\right)\frac{1}{\mu} - \frac{\gamma}{\alpha}}
= \varepsilon^{-\gamma/\alpha} = \varepsilon^{-1},
\]
where we used $\beta = \alpha$.
Both plots in Figure \ref{fig:complexity} confirm these rates.
The asymptotic cost ratio between the single-level and multilevel methods grows in this case like
\[
\mathcal{C}^{\mathrm{SL}}_\varepsilon / \,\mathcal{C}^{\mathrm{ML}}_\varepsilon = \varepsilon^{-1/\mu} = \varepsilon^{-1/2},
\]
giving savings of around two orders of magnitude at the finest tolerances considered.

The two adaptive constructions attain the same asymptotic rate as the rate-driven method, without any prior knowledge of the problem parameters $\alpha$, $\beta$, $\gamma$.
Comparing the two acquisition criteria, the posterior-oriented SN criterion, which incorporates information about the inverse problem, achieves a posterior accuracy at smaller cost compared to the forward-oriented ALM criterion, i.e., with a smaller constant in the complexity curve.
This is explained by the resulting design sets in Figure \ref{fig:design_set}.

The right plot of Figure \ref{fig:complexity} shows the same comparison in computational time.
Hence this includes, in contrast to the left plot, also the costs for assembling and solving kernel matrix systems at each level, or the time required to evaluate acquisition criteria for the adaptive methods.
We can see that the theoretical rates also predict the resulting computational time well, with a better prediction for smaller tolerances.
The offset in the constant for the ML curves reflects the overhead of fitting a GP on every level and of evaluating the acquisition criterion often for the adaptive methods.
These costs become more negligible compared to the many PDE solves as $\varepsilon$ decreases.

\begin{figure}[h]
\centering
\includegraphics[width=\linewidth]{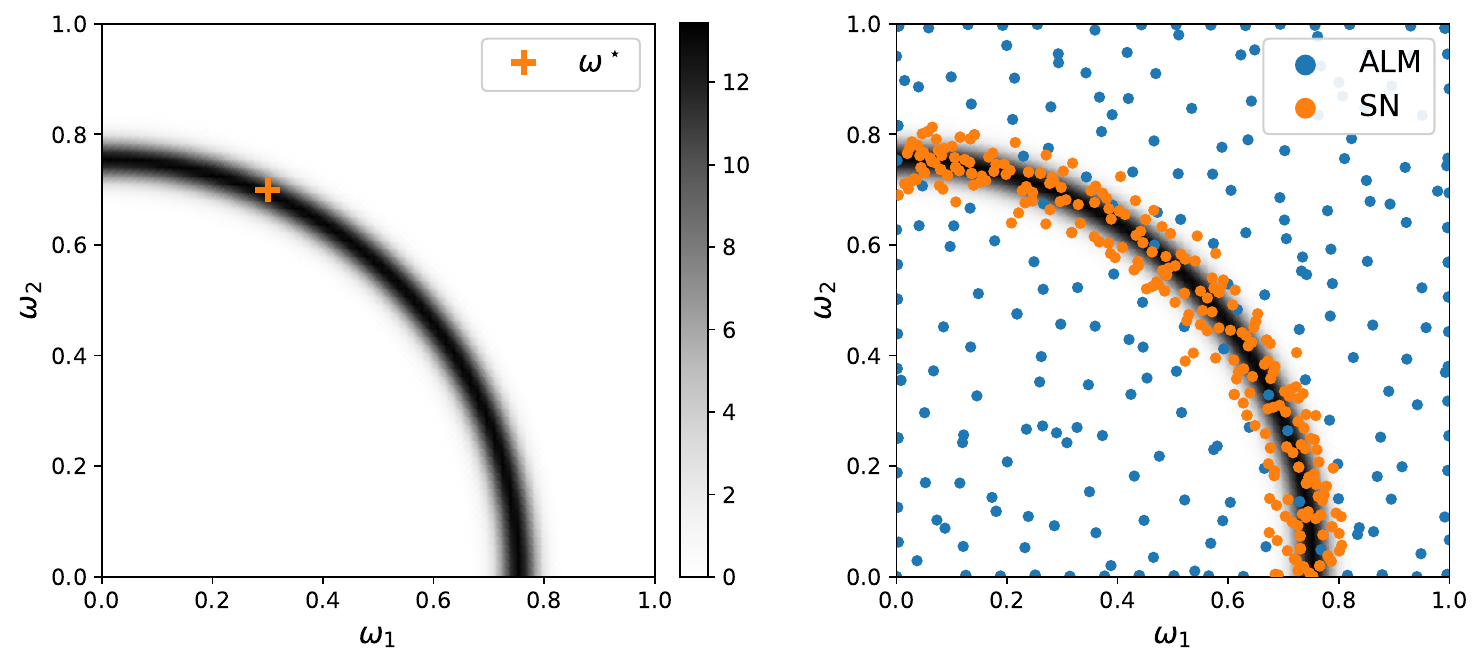}
\caption{True posterior density $\pi^y$ with the true parameter $\omega^\star$ (left), together with the design points selected by the forward-oriented (ALM) and posterior-oriented (SN) adaptive criteria, accumulated over all levels (right).
The ALM design is quasi-uniform over $\Omega$, whereas the SN design concentrates in the region of high posterior mass.}
\label{fig:design_set}
\end{figure}

The right plot of Figure \ref{fig:design_set} compares the design sets produced by the two adaptive acquisition criteria with the true posterior.
Due to the small observational noise $\sigma_{\mathrm{obs}} = 10^{-3}$ and since $f$ depends on $\omega$ only through $\|\omega\|_2$, the posterior concentrates along a thin curved ridge in $\Omega$ passing through $\omega^\star$, shown in the left of the same figure.
The ALM criterion, which has no information of the inverse problem, distributes its design points quasi-uniformly over the whole parameter domain, consistent with the asymptotic quasi-uniformity of maximum-variance designs \cite{Wenzel2021}.
On the other hand, the SN criterion concentrates the majority of its evaluations, across all levels, in the region of significant posterior mass, and only sparsely covers the remainder of the domain.

As mentioned in Section \ref{sec:posterior_criteria}, these designs are not quasi-uniform, so that the results of Sections \ref{sec:mlgp} and \ref{sec:bip} do not apply directly, and the resulting surrogate is accurate only in this region rather than on all of $\Omega$.
Visually, one could argue that it slightly oversamples the tails of the posterior, which is consistent with \cite[Theorem 3]{helinIntroductionGaussianProcess2024}, where the local fill distance should be balanced with the square-root $\sqrt{\pi^y}$ of the posterior density for a good approximation.
This is also reflected in the weighting by $\sqrt{\pi^y}$ in \eqref{eq:sqrt_post_weighting}.
Since surrogate errors outside the support of the posterior do not impact the Hellinger distance, this targeted placement explains the cost advantage of the posterior-oriented criterion observed in Figure \ref{fig:complexity}.

\section{Conclusion}
\label{sec:conclusion}
We have introduced a multilevel Gaussian process (GP) framework for the emulation of functions when a hierarchy of approximations is available.
In case of response surfaces of parametrized PDEs, we modeled the level increments by independent Matérn-kernel GPs and estimated the marginal variance on each level by maximum likelihood, deriving $L^2$ convergence rates for both the posterior mean and the MLE-scaled posterior standard deviation.
These bounds also hold, with a reduced rate, when the kernel is smoother than the function it approximates.
Both bounds have the same rate, so that the mean squared error, which also accounts for the predictive uncertainty, attains the same guaranteed rate as the posterior mean alone.

Combining these bounds with the Hellinger estimate of
\cite{stuartPosteriorConsistencyGaussian2017} yields a complexity result for the multilevel surrogate posterior of a Bayesian inverse problem.
When using only one single GP surrogate, the computational cost is given by the product of the number of samples required by the GP regression and the cost of each PDE solve, whereas the multilevel cost reduces to their maximum.
Depending on whether the increments decay faster or slower than the cost of the solver grows, either the PDE solves or the regression comes for free.

For the practically relevant setting in which the rates $\alpha$, $\beta$, $\gamma$
and the increment regularity are not available a priori, we proposed an adaptive
construction that places samples one at a time and adds levels as needed, guided by
different cost-adjusted acquisition criteria.
In the numerical experiments, both the forward-oriented and the posterior-oriented criterion achieve the asymptotic rate of the non-adaptive multilevel method.
The posterior-oriented criterion concentrates its design in the region of significant posterior mass and reaches a given accuracy at a smaller constant.

The rates degrade as $\min(\tau,\tau_\Delta)/d$, known as the curse of dimensionality, making also the multilevel method difficult to apply in larger parameter dimensions $d$.
Sparse or additive kernels that exploit anisotropic structure can be applied to remedy this \cite{Addy2026}.
Finally, the numerical results suggest that the posterior variance could be used as a computable error, and hence, stopping criterion, which would also allow the adaptive algorithm to target a prescribed tolerance.
This is left as future work.

\section*{Acknowledgments}
This work was supported by KIT MathSEE and the KIT Graduate School for Computational and Data Science through the Bridge PhD grant.
ALT was partially supported by EPSRC grants EP/X01259X/1 and EP/Y028783/1.

\bibliographystyle{siamplain}
\bibliography{references}
\end{document}